\let\mathbb\mathds
\DeclareMathAlphabet\oldmathcal{OMS}        {cmsy}{b}{n}
\SetMathAlphabet    \oldmathcal{normal}{OMS}{cmsy}{m}{n}
\DeclareMathAlphabet\oldmathbcal{OMS}       {cmsy}{b}{n}
\newtheorem{theorem}{Theorem}[section]
\newtheorem{lemma}[theorem]{Lemma}
\newtheorem{corollary}[theorem]{Corollary}
\newtheorem{definition}[theorem]{Definition}
\newtheorem{def/prop}[theorem]{Definition/Proposition}
\newenvironment{example}{\medskip \refstepcounter{theorem}
\noindent  {\bf Example \thetheorem}.\rm}{\,}
\newenvironment{remark}{\medskip \refstepcounter{theorem}
\newcommand     {\comment}[1]   {}
\newcommand{\mute}[2] {}
\newcommand     {\printname}[1] {}

\noindent  {\bf Remark \thetheorem}.\rm}{\,}
\newtheorem*{ack}{Acknowledgements}
\def\<{\langle}
\def\>{\rangle}
\def\BOne{{\mathchoice {\rm 1\mskip-4mu l} {\rm 1\mskip-4mu l}
                          {\rm 1\mskip-4.5mu l} {\rm 1\mskip-5mu l}}}
\def\fract#1#2{\raise4pt\hbox{$ #1 \atop #2 $}}
\def\decdnar#1{\phantom{\hbox{$\scriptstyle{#1}$}}
\left\downarrow\vbox{\vskip15pt\hbox{$\scriptstyle{#1}$}}\right.}
\def\bbc{{\mathbb C}}
\def\bbn{{\mathbb N}}
\def\bbp{{\mathbb P}}
\def\bbq{{\mathbb Q}}
\def\bbr{{\mathbb R}}
\def\bbt{{\mathbb T}}
\def\bbz{{\mathbb Z}}
\def\gra{\alpha}
\def\grb{\beta}
\def\grg{\gamma}
\def\grl{\lambda}
\def\gro{\omega}
\def\grs{\sigma}
\def\grD{\Delta}
\def\bfl{{\bf l}}
\def\bfm{{\bf m}}
\def\bfn{{\bf n}}
\def\bfv{{\bf v}}
\def\bfw{{\bf w}}
\def\calb{{\mathcal B}}
\def\calo{{\mathcal O}}
\def\calf{{\mathcal F}}
\def\calh{{\mathcal H}}
\def\cali{{\mathcal I}}
\def\calk{{\mathcal K}}
\def\call{{\mathcal L}}
\def\calo{{\mathcal O}}
\def\cals{{\oldmathcal S}}
\def\calt{{\mathcal T}}
\def\la#1{\hbox to #1pc{\leftarrowfill}}
\def\ra#1{\hbox to #1pc{\rightarrowfill}}
\def\calz{{\oldmathcal Z}}
\def\gz{{\mathfrak z}}
\def\gB{{\mathfrak B}}
\def\gF{{\mathfrak F}}
\def\gL{{\mathfrak L}}
\def\gM{{\mathfrak M}}
\def\gO{{\mathfrak O}}
\def\hook{\mathbin{\hbox to 6pt{%
                 \vrule height0.4pt width5pt depth0pt
                 \kern-.4pt
                 \vrule height6pt width0.4pt depth0pt\hss}}}
\def\12{\xi_{k_1,k_2}}
\def\m5{M^5_{k_1,k_2}}
\begin{document}

\title{Sasaki-Einstein Metrics on a class of 7-Manifolds}

\author[Charles Boyer]{Charles P. Boyer}
\address{Charles P. Boyer, Department of Mathematics and Statistics,
University of New Mexico, Albuquerque, NM 87131.}
\email{cboyer@math.unm.edu} 
\author[Christina T{\o}nnesen-Friedman]{Christina W. T{\o}nnesen-Friedman}
\address{Christina W. T{\o}nnesen-Friedman, Department of Mathematics, Union
College, Schenectady, New York 12308, USA }
\email{tonnesec@union.edu}
\thanks{The authors were partially supported by grants from the
Simons Foundation, CPB by (\#519432), and CWT-F by (\#422410)}
%\keywords{generalized Calabi construction, extremal K\"ahler metrics}
\subjclass[2000]{Primary: 53C25}
\date{\today}

\begin{abstract}
In this note we give an explicit construction of Sasaki-Einstein metrics on a class of simply connected 7-manifolds with the rational cohomology of the 2-fold connected sum of $S^2\times S^5$. The homotopy types are distinguished by torsion in $H^4$.

\end{abstract}

\maketitle
\vspace{-7mm}

\section*{Introduction}\label{intro}
Sasaki-Einstein metrics on 7-manifolds continue to play an important role in M-theory as well as black hole physics \cite{Spa10,GLPS17,MoTo18}. An important reason for this is that Sasaki-Einstein manifolds admit supersymmetry and are used in the AdS-CFT correspondence. For such reasons it seems important to have a large list of explicit examples of Sasaki-Einstein 7-manifolds that can be used as possible models. Since these SE metrics are toric, general existence of such metrics in well known \cite{FOW06,Leg16}. What is new here is an explicit construction of such metrics, their relation with Bott manifolds (orbifolds), and the topological description of the 7-manifolds. We give an explicit construction of toric Sasaki-Einstein (SE) 7-manifolds which can be represented as $S^1$ orbibundles over 2-twist stage 3 Bott orbifolds.
All of these are obtained by adding orbifold structures to certain stage 3 Bott manifolds which were studied in \cite{BoCaTo17}. The 7-manifolds all have the rational cohomology of the 2-fold connected sum $2(S^2\times S^5)$ and are generalizations of the SE 7-manifolds given by Theorem 1.2 of \cite{BoTo14NY}.

\begin{ack}
The authors would like to thank David Calderbank for his comments and his interest in this work. In particular, the second author would like to express her gratitude to David for our many helpful conversations about orbifold metric constructions.
\end{ack} 

\section{Stage Three Bott Towers and Orbifolds}
Following \cite{GrKa94} and \cite{BoCaTo17} we consider {\it Bott towers} which in arbitrary dimension is represented by a lower triangular unipotent matrix $A$ over $\bbz$. Here we deal only with stage 3 Bott towers, so the matrix $A$ in \cite{GrKa94,BoCaTo17} takes the form 
\begin{equation}\label{A3matrix}
A=\begin{pmatrix}
1 & 0 & 0 \\
a & 1 & 0 \\
b & c & 1
\end{pmatrix}
\end{equation}
with $a,b,c\in \bbz$. The Bott manifold can be realized as the quotient of $S^3\times S^3\times S^3$ by the $\bbt^3$ action
\begin{equation}\label{Bottaction}
(z_j^0,z_j^\infty)_{j=1}^3\mapsto (t_jz_j^0,\prod_{i=1}^3t_i^{A^i_j}z_j^\infty).
\end{equation} 
The quotient $M_3$ which is called a {\it Bott manifold} can be represented as a sequence, called a {\it Bott tower}
\begin{equation}\label{Botttower}
M_3 \xrightarrow{\pi_3}M_2
\xrightarrow{\pi_2} M_1\xrightarrow{\pi_1}M_0= \mathit{pt},
\end{equation}
where the jth $S^3$ is written as $|z_j^0|^2 +|z_j^\infty|^2=1$.
Then $M_k$ is the
compact complex manifold arising as the total space of the $\bbc\bbp^1$ bundle
$\pi_k\colon \bbp(\BOne \oplus \call_k) \to M_{k-1}$.
At each stage we have \emph{zero} and \emph{infinity sections} $\grs_k^0\colon
M_{k-1}\to M_k$ and $\grs_k^\infty\colon M_{k-1}\to M_k$ which respectively
identify $M_{k-1}$ with $\bbp(\BOne \oplus 0)$ and $\bbp(0\oplus \call_k)$.
We consider these to be part of the structure of the Bott tower
$(M_k,\pi_k,\grs_k^0,\grs_k^\infty)_{k=1}^n$.  Here in our case $M_3=M_3(a,b,c)$ is a stage 3 Bott manifold, $M_2(a)$ is a Hirzebruch surface $\calh_a = \bbp(\BOne \oplus \calo(a))\ra{1.8} \bbc\bbp^1$, and $M_1=\bbc\bbp^1$. $M_3(a,b,c)$ can be viewed as the total space of $\bbc\bbp^1$ bundle over the Hirzebruch surface
$\calh_a$, and also a bundle of Hirzebruch surfaces over $\bbc\bbp^1$ with
fiber $\calh_c$. Bott towers form the object set $\calb\calt_0$ of a groupoid whose morphisms $\calb\calt_1$ are biholomorphisms \cite{BoCaTo17}, and elements of the quotient space $\calb\calt_0/\calt_1$ are identfied with biholomorphism classes of Bott manifolds. Since Bott manifolds are toric, they are described by a fan, and it follows from the Bott tower description \eqref{Botttower} that  
the fan of the Bott tower $M_3(a,b,c)$ is described by the primitive collections (cf. \cite{CoLiSc11})
\begin{equation}\label{primcol}
\{v_1,u_1\},\quad \{v_2,u_2\},\quad \{v_3,u_3\}
\end{equation}
with normal vectors $u_1=-v_1-av_2-bv_3, u_2=-v_2-cv_3,u_3=-v_3$, and thus has the combinatorial type of a cube.
The symmetry group of a cube is the Coxeter group
$BC_3\cong \mathrm{Sym}_3\ltimes\bbz_2^3$ where $\mathrm{Sym}_3$ is the symmetric group on 3 letters. However, not all elements of $BC_3$ are induced by equivalences. We refer to \cite{BoCaTo17} and references therein for details.

The structure of Bott towers implies the existence of 3 pairs of $\bbt^3$ invariant divisors $\{D_{v_j},D_{u_j}\}$ which are the zero and infinity sections of $M_j\xrightarrow{\pi_j}M_{j-1}$ with $j=1,2,3$. Thus, elements of the subgroup $\bbz_2^3$ are induced by the {\it fiber inversion} maps that interchange the zero and infinity sections, so these equivalences always exist. However, elements of $\mathrm{Sym}_3$ are induced by equivalences only in special cases (see Lemma 1.11 and Example 1.6 of \cite{BoCaTo17} for details).

Recall (Definition 1.6 of \cite{BoCaTo17}) that the {\it holomorphic twist} of an $n$ dimensional Bott tower is the number $t\in\{0,\ldots,n-1\}$ of holomorphically nontrivial $\bbc\bbp^1$ bundles in the tower. So for $n=3$ there are only 3 possibilities $t=0,1,2$. Of course, $t=0$ is the well understood product $\bbc\bbp^1\times \bbc\bbp^1\times \bbc\bbp^1$, whereas $t=1$ leads to the Koiso-Sakane case, so here we restrict our attention to $t=2$. For the $t=2$ case we have a nontrivial holomorphic bundle over a Hirzebruch surface $\calh_a$ with $a\neq 0$ whose fiber is $\bbc\bbp^1$. The case of interest to us can be obtained as an $S^3_\bfw$-join with the $Y^{p,q}$ structures of \cite{GMSW04a} on $S^2\times S^3$, that is, $(S^2\times S^3)\star_{l_1,l_2}S^3_\bfw$. So the stage 3 Bott manifold has an additional orbifold structure which we now describe.

\subsection{Invariant Divisors}\label{divsec}
Here we consider the $\bbt^3$ invariant divisors $D_{v_i},D_{u_i}$ defined by the normal vectors $v_i,u_i$ with equivalences
$$D_{v_1}\sim D_{u_1},\qquad D_{v_2}\sim aD_{u_1}+D_{u_2},\qquad D_{v_3}\sim bD_{u_1}+cD_{u_2}+D_{u_3}.$$
We have 4 sets of distinguished invariant bases of the Chow group $A_2(M_3)$ of invariant divisor classes
\begin{gather}\
\label{bases1} \{[D_{u_1}],[D_{u_2}],[D_{u_3}]\}, \\
\{[D_{u_1}],[D_{u_2}],[D_{v_3}]\}, \\
\{[D_{u_1}],[D_{v_2}],[D_{u_3}]\}, \\
\label{bases4} \{[D_{u_1}],[D_{v_2}],[D_{v_3}]\}. 
\end{gather}
This gives rise to the 4 sets of dual bases of cohomology classes in $H^2(M_3,\bbz)$, viz
\begin{gather}\
\label{dualbases1} \{x_1,x_2,x_3\}, \\
\{x_1,x_2,y_3\}, \\
\{x_1,y_2,x_3\}, \\
\label{dualbases4} \{x_1,y_2,y_3\}, 
\end{gather}
where $x_i$ is dual to $[D_{u_i}]$ and $y_i$ is dual to $[D_{v_i}]$. Both the ample and K\"ahler cones can be easily worked out, see Example 3.3 of \cite{BoCaTo17}.

\subsection{Bott Orbifolds and log pairs}
We are interested in these Bott manifolds, but with an additional special orbifold structure along the $\bbt^3$-invariant divisors $D_{v_i},D_{u_i}$. 
The orbifold structure on $M_3(a,b,c)$ that we are interested in is given by the log pair $(M_3(a,b,c),\grD_\bfm)$ where $\grD_\bfm$ is the branch divisor
\begin{equation}\label{bradiv}
\Bigl(1-\frac{1}{m_1^0}\Bigr)D_{v_1}+\Bigl(1-\frac{1}{m_1^\infty}\Bigr)D_{u_1} +\Bigl(1-\frac{1}{m_2^0}\Bigr)D_{v_2}+\Bigl(1-\frac{1}{m_2^\infty}\Bigr)D_{u_2} +\Bigl(1-\frac{1}{m_3^0}\Bigr)D_{v_3}+\Bigl(1-\frac{1}{m_3^\infty}\Bigr)D_{u_3}
\end{equation}
where $m_j^0,m_j^\infty\in\bbz^+$ are the ramification indices. We define $\grD_{\bfm_3}=\grD_\bfm$ and 
\begin{eqnarray*}
\grD_{\bfm_2}&=&\Bigl(1-\frac{1}{m_1^0}\Bigr)D_{v_1}+\Bigl(1-\frac{1}{m_1^\infty}\Bigr)D_{u_1} +\Bigl(1-\frac{1}{m_2^0}\Bigr)D_{v_2}+\Bigl(1-\frac{1}{m_2^\infty}\Bigr)D_{u_2} \\ 
\grD_{\bfm_1}  &=&\Bigl(1-\frac{1}{m_1^0}\Bigr)D_{v_1}+\Bigl(1-\frac{1}{m_1^\infty}\Bigr)D_{u_1}.
\end{eqnarray*}
Clearly we have 
$$\grD_{\bfm_1}\subset \grD_{\bfm_2} \subset \grD_{\bfm_3}.$$
From \cite{GrKa94} or \cite{BoCaTo17} one easily sees that
\begin{lemma}\label{orbquot} 
We have the sequence of Bott towers of log pairs
\begin{equation}\label{Bottorbitower}
\bigl(M_3(a,b,c),\grD_{\bfm_3}\bigr)\fract{\pi_3}{\ra{2.2}}\bigl(M_2(a),\grD_{\bfm_2}\bigr)\fract{\pi_2}{\ra{2.2}} \bigl(M_1,\grD_{\bfm_1})\fract{\pi_1}{\ra{2.2}}(\{pt\},\emptyset),
\end{equation}
where $M_2(a)=\calh_a$ is a Hirzebruch surface, $M_1=\bbc\bbp^1$, and $\pi_i$ is the natural projection. 
\end{lemma}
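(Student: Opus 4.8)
The plan is to promote the already-established manifold-level Bott tower \eqref{Botttower} to a tower of log pairs by checking that the branch divisors of \eqref{bradiv} and its truncations are compatible with the bundle projections $\pi_k$, and then to pass from log pairs to orbifolds via the standard root-stack packaging of a smooth variety with simple normal-crossing boundary.

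The key step is the compatibility of the branch data with the projections. First I would use the fact, recorded above, that for each $j$ the pair $\{D_{v_j},D_{u_j}\}$ is the pair of zero and infinity sections of $\pi_j\colon M_j\to M_{j-1}$. In a toric $\bbc\bbp^1$-bundle $\bbp(\BOne\oplus\call_k)\to M_{k-1}$ the non-section $\bbt^3$-invariant prime divisors are precisely the $\pi_k$-preimages of the $\bbt^3$-invariant prime divisors of the base (on the level of fans, the projection carries the ray of such a divisor onto the corresponding ray of $M_{k-1}$ with multiplicity one, even when that ray ``sticks out'' in the new fibre direction because of the twisting by $a,b,c$). It follows inductively that for $j<k$ the divisor $D_{v_j}$ (resp. $D_{u_j}$) of $M_k$ equals the total transform $(\pi_{j+1}\circ\cdots\circ\pi_k)^{*}$ of the section $D_{v_j}$ (resp. $D_{u_j}$) of $M_j$. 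Reading off the coefficients in \eqref{bradiv} then gives, as an honest identity of $\bbq$-divisors,
$$\grD_{\bfm_k}=\pi_k^{*}\grD_{\bfm_{k-1}}+\Bigl(1-\frac{1}{m_k^0}\Bigr)D_{v_k}+\Bigl(1-\frac{1}{m_k^\infty}\Bigr)D_{u_k},\qquad k=2,3,$$
together with $\grD_{\bfm_1}=\bigl(1-\frac{1}{m_1^0}\bigr)D_{v_1}+\bigl(1-\frac{1}{m_1^\infty}\bigr)D_{u_1}$. In particular $\pi_k^{*}\grD_{\bfm_{k-1}}\le\grD_{\bfm_k}$, each $\pi_k$ is a morphism of log pairs, and the boundary at stage $k$ is the pulled-back boundary from stage $k-1$ together with exactly the two new bundle sections; this is the defining shape of a Bott tower of log pairs.

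Next I would pass to orbifolds. The six divisors $D_{v_1},D_{u_1},D_{v_2},D_{u_2},D_{v_3},D_{u_3}$ are smooth $\bbt^3$-invariant divisors of the smooth toric manifold $M_3$ and, by the cube combinatorics of the fan \eqref{primcol}, they cross transversally, so each $(M_k,\grD_{\bfm_k})$ carries its canonical effective orbifold structure --- the iterated cyclic root stack along these normal-crossing divisors with ramification indices $m_j^0,m_j^\infty$ --- exactly as in \cite{GrKa94,BoCaTo17}. The displayed decomposition of $\grD_{\bfm_k}$ shows that over the generic fibre of $\pi_k$ the log structure consists of the two points $D_{v_k},D_{u_k}$ with multiplicities $m_k^0,m_k^\infty$, while transverse to the sections it is pulled back from $(M_{k-1},\grD_{\bfm_{k-1}})$; hence $\pi_k$ lifts to an orbifold submersion (an orbifold $\bbc\bbp^1$-bundle) of the associated orbifolds. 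Composing $\pi_3,\pi_2,\pi_1$ and appending the projection $(M_1,\grD_{\bfm_1})\to(\{pt\},\emptyset)$ yields the sequence \eqref{Bottorbitower}.

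The only step that is not purely formal is the middle one: obtaining the pullback identity for $\grD_{\bfm_k}$ as an equality of $\bbq$-divisors (not merely up to rational equivalence) by tracking the $\bbt^3$-invariant divisors and their coefficients through the tower. Once that bookkeeping is in place, everything else is a direct transcription of the manifold-level Bott tower \eqref{Botttower} plus the standard root-stack description of a smooth log pair with normal-crossing boundary, applied stage by stage --- which is why the lemma is asserted to follow easily from \cite{GrKa94,BoCaTo17}.
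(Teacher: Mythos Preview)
Your proposal is correct and follows the same route the paper indicates: the paper does not write out a proof but simply says the lemma ``easily'' follows from \cite{GrKa94,BoCaTo17}, and immediately after the lemma records the key identities $D_{v_3}=\grs_2^0(\calh_a)$, $D_{u_3}=\grs_2^\infty(\calh_a)$, $D_{v_2}=\pi_3^{-1}(\grs_1^0(\bbc\bbp^1))$, $D_{u_2}=\pi_3^{-1}(\grs_1^\infty(\bbc\bbp^1))$, $D_{v_1}=(\pi_2\circ\pi_3)^{-1}(\grs_1^0(\{pt\}))$, $D_{u_1}=(\pi_2\circ\pi_3)^{-1}(\grs_1^\infty(\{pt\}))$, which are exactly your pullback formulas for the invariant divisors. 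Your write-up simply supplies the details the paper suppresses; the substance is the same.
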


The invariant branch divisors are related to the section maps by
$$D_{v_3}=\grs_2^0(\calh_a),~D_{u_3}=\grs_2^\infty(\calh_a),~D_{v_2}=\pi_3^{-1}(\grs_1^0(\bbc\bbp^1)),~D_{u_2}=\pi_3^{-1}(\grs_1^\infty(\bbc\bbp^1))$$ 
and 
$$D_{v_1}=(\pi_2\circ\pi_3)^{-1}(\grs_1^0(\{pt\})),~ D_{u_1}=(\pi_2\circ\pi_3)^{-1}(\grs_1^\infty(\{pt\})).$$
We denote by $\gB\gO_A$ the set of all such Bott orbifolds. 

\subsection{The Orbifold First Chern Class}
We can now compute the orbifold canonical divisor $K^{orb}$ and dually the orbifold first Chern class.
We compute the orbifold first Chern class $c_1^{orb}$ in the $\{x_j\}$ basis for any n-dimensional Bott manifold $M_n(A)$;
\begin{eqnarray}\label{c1orb}
c_1^{orb}(M_\bfn(A),\grD_\bfm) &=&c_1(M_\bfn(A)) -\sum_{i=1}^n\Bigl(\bigl(1-\frac{1}{m_i^0}\bigr)y_i +\bigl(1-\frac{1}{m_i^\infty}\bigr)x_i\Bigr) \notag \\
                                             &=& \sum_{i=1}^n(x_i+y_i)-\sum_{i=1}^n\Bigl(\bigl(1-\frac{1}{m_i^0}\bigr)y_i +\bigl(1-\frac{1}{m_i^\infty}\bigr)x_i\Bigr) \notag \\
                              &=&\sum_{j=1}^n\bigl(\frac{1}{m_j^0}y_j+\frac{1}{m_j^\infty}x_j\bigr)  =\sum_{i=1}^n\Bigl((\frac{1}{m_i^0}+\frac{1}{m_i^\infty})x_i+\sum_{j=1}^{i-1}\frac{A^j_i}{m_i^0}x_j\Bigr) \notag \\
                              &=& \sum_{j=1}^{n-1}\Bigl(\frac{1}{m_j^0}+\frac{1}{m_j^\infty}+\sum_{i=j+1}^n\frac{A^j_i}{m_i^0}\Bigr)x_j +\Bigl(\frac{1}{m_n^0}+\frac{1}{m_n^\infty}\Bigr)x_n.
\end{eqnarray}
In dimension n there are $2^{n-1}$ invariant bases in which to compute $c_1^{orb}$. This becomes much more manageable for $n=3$. We have $c_1^{orb}(M_3(a,b,c),\grD_\bfm)$ in the four bases \eqref{dualbases1}-\eqref{dualbases4}, respectively
\begin{eqnarray}\label{c1orb1}
\bigl(\frac{1}{m_1^0}+\frac{1}{m_1^\infty}+\frac{a}{m_2^0}+\frac{b}{m_3^0}\bigr)x_1+  \bigl(\frac{1}{m_2^0}+\frac{1}{m_2^\infty}+\frac{c}{m_3^0}\bigr)x_2 +\bigl(\frac{1}{m_3^0}+\frac{1}{m_3^\infty}\bigr)x_3, \\
\bigl(\frac{1}{m_1^0}+\frac{1}{m_1^\infty}+\frac{a}{m_2^0}-\frac{b}{m_3^\infty}\bigr)x_1+  \bigl(\frac{1}{m_2^0}+\frac{1}{m_2^\infty}-\frac{c}{m_3^\infty}\bigr)x_2 +\bigl(\frac{1}{m_3^0}+\frac{1}{m_3^\infty}\bigr)y_3, \\
\bigl(\frac{1}{m_1^0}+\frac{1}{m_1^\infty}-\frac{a}{m_2^\infty}+\frac{b-ac}{m_3^0}\bigr)x_1+  \bigl(\frac{1}{m_2^0}+\frac{1}{m_2^\infty}+\frac{c}{m_3^0}\bigr)y_2 +\bigl(\frac{1}{m_3^0}+\frac{1}{m_3^\infty}\bigr)x_3, \\
\bigl(\frac{1}{m_1^0}+\frac{1}{m_1^\infty}-\frac{a}{m_2^\infty}-\frac{b-ac}{m_3^\infty}\bigr)x_1+  \bigl(\frac{1}{m_2^0}+\frac{1}{m_2^\infty}-\frac{c}{m_3^\infty}\bigr)y_2 +\bigl(\frac{1}{m_3^0}+\frac{1}{m_3^\infty}\bigr)y_3. \label{c1orb4}                                                 
\end{eqnarray}
Note also that as a cohomology class $c_1^{orb}(M_3(a,b,c),\grD_\bfm)$ as an element of $H^{1,1}(M_n(a,b,c),\bbr)$ makes perfect sense for all $m_j^0,m_j^\infty\in\bbr^+$.
We shall make use of this fact shortly. 

Equations \eqref{c1orb1}-\eqref{c1orb4} implies
\begin{lemma}\label{orbFano}
Let $(M_3(a,b,c),\grD_\bfm)$ be a Bott orbifold. Then the following are equivalent:
\begin{enumerate}
\item $(M_3(a,b,c),\grD_\bfm)$ is log Fano, 
\item $c_1^{orb}(M_3(a,b,c),\grD_\bfm)$ lies in the K\"ahler cone $\calk(M_3(a,b,c))$,
\item the inequalities
\begin{gather}\ \notag
\frac{1}{m_1^0}+\frac{1}{m_1^\infty}+\frac{a}{m_2^0}+\frac{b}{m_3^0}>0,\quad  \frac{1}{m_2^0}+\frac{1}{m_2^\infty}+\frac{c}{m_3^0}>0 \\
\frac{1}{m_1^0}+\frac{1}{m_1^\infty}+\frac{a}{m_2^0}-\frac{b}{m_3^\infty}>0,\quad  \frac{1}{m_2^0}+\frac{1}{m_2^\infty}-\frac{c}{m_3^\infty}>0 \notag \\
\frac{1}{m_1^0}+\frac{1}{m_1^\infty}-\frac{a}{m_2^\infty}+\frac{(b-ac)}{m_3^0}>0,\quad  \frac{1}{m_2^0}+\frac{1}{m_2^\infty}+\frac{c}{m_3^0}>0 \notag \\
\frac{1}{m_1^0}+\frac{1}{m_1^\infty}-\frac{a}{m_2^\infty}-\frac{(b-ac)}{m_3^\infty}>0,\quad  \frac{1}{m_2^0}+\frac{1}{m_2^\infty}-\frac{c}{m_3^\infty}>0 \notag
\end{gather}
hold.
\end{enumerate}
\end{lemma}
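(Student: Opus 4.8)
The plan is to establish the chain (1) $\Leftrightarrow$ (2) $\Leftrightarrow$ (3), with (2) as the pivot; the only genuine input is the known description of the K\"ahler cone of a stage $3$ Bott manifold, everything else being an unwinding of definitions and the substitution of \eqref{c1orb1}--\eqref{c1orb4}.

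For (1) $\Leftrightarrow$ (2) I would just unwind the definition of log Fano. The support of $\grD_\bfm$ is the union of the six $\bbt^3$-invariant divisors $D_{v_i},D_{u_i}$, which on the smooth projective toric variety $M_3(a,b,c)$ is a simple normal crossing divisor, and every coefficient $1-\tfrac{1}{m_\bullet}$ lies in $[0,1)$; hence the pair $(M_3(a,b,c),\grD_\bfm)$ is automatically klt, so ``log Fano'' reduces to the single requirement that $-(K_{M_3(a,b,c)}+\grD_\bfm)$ be ample. By \eqref{c1orb} this divisor class is $c_1^{orb}(M_3(a,b,c),\grD_\bfm)$, and it has rational coefficients whenever the ramification indices are positive integers. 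On a projective toric variety $H^{1,1}(\cdot,\bbr)$ is spanned by divisor classes and the ample cone coincides with the K\"ahler cone $\calk$, and by Kleiman's criterion a $\bbq$-Cartier class is ample precisely when it lies in the open cone $\calk$; since ampleness depends only on the numerical class, the a priori merely real class $c_1^{orb}$ causes no trouble. This gives (1) $\Leftrightarrow$ (2).

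For (2) $\Leftrightarrow$ (3) I would invoke the explicit description of $\calk(M_3(a,b,c))$ from Example 3.3 of \cite{BoCaTo17}: because of the iterated $\bbc\bbp^1$-bundle (Bott tower) structure \eqref{Botttower}, a class is K\"ahler if and only if, when written in each of the four distinguished dual bases \eqref{dualbases1}--\eqref{dualbases4}, all of its coordinates are strictly positive (equivalently, it pairs strictly positively with every $\bbt^3$-invariant curve). Substituting the four expressions \eqref{c1orb1}--\eqref{c1orb4} for $c_1^{orb}$, I would observe that in each of the four bases the coordinate of the last basis vector ($x_3$, resp.\ $y_3$) equals $\tfrac{1}{m_3^0}+\tfrac{1}{m_3^\infty}>0$ automatically, so that only the first two coordinates in each basis impose conditions; these are exactly the eight inequalities listed in item (3), two of which (the coefficients of $x_2$ and $y_2$ appearing in the first and third, resp.\ the second and fourth, bases) coincide. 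This yields (2) $\Leftrightarrow$ (3) and completes the proof.

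The step I expect to require the most care is the use of the K\"ahler cone description: one must know not merely that positivity in these four bases is sufficient for a class to be K\"ahler, but also that it is necessary, i.e.\ that the Mori cone $\ol{\mathrm{NE}}(M_3(a,b,c))$ is generated by exactly the $\bbt^3$-invariant curves dual to the bases \eqref{bases1}--\eqref{bases4}. This is where the fan combinatorics --- the cube structure and the primitive collections \eqref{primcol} --- enters, and it is precisely what is recorded in \cite[Example 3.3]{BoCaTo17}; everything else is the bookkeeping of substituting \eqref{c1orb1}--\eqref{c1orb4} into those positivity conditions.
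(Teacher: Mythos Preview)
Your proposal is correct and follows exactly the route the paper intends: the paper does not give a separate proof but simply writes ``Equations \eqref{c1orb1}--\eqref{c1orb4} implies'' before stating the lemma, leaving implicit precisely the two ingredients you spell out --- the identification of log Fano with $c_1^{orb}\in\calk$ and the K\"ahler cone description from \cite[Example 3.3]{BoCaTo17}. Your write-up is a faithful elaboration of that implicit argument, including the observation that the $x_3$/$y_3$ coefficients are automatically positive and that two of the listed inequalities repeat.
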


\section{$M^7$ as the join $Y^{p,q}\star_{l_1,l_2}S^3_\bfw$}
Not every $S^1$ orbibundle over a Bott orbifold can be realized as a join; however, the 2 twist stage 3 Bott orbifolds that we study here can be realized as K\"ahler quotients of the join $Y^{p,q}\star_{l_1,l_2}S^3_\bfw$ where $Y^{p,q}$ are the well known SE structures on $S^2\times S^3$ discovered by the physicists \cite{GMSW04a}. In fact, since $Y^{p,q}$ is itself a join of two $S^3$'s, it is an iterated join of three $S^3$'s which in the terminology of \cite{BHLT16} is completely cone decomposable. Now from Example 6.8 of \cite{BoTo14a} we have
\begin{equation}\label{Ypqjoin}
Y^{p,q}=S^3\star_{l,p}S^3_{\frac{p+q}{l},\frac{p-q}{l}}
\end{equation}
where $l=\gcd(p+q,p-q)$ which equals $2$ if $p,q$ are both odd, and equals $1$ if $p,q$ have opposite parity. Note that we choose the standard SE structure on the lefthand $S^3$ factor, whereas it is the weighted Sasakian structure with weights $(\frac{p+q}{l},\frac{p-q}{l})$ on the righthand $S^3$. 

If we assume that $p>q\geq 1$ are such that $\sqrt{4p^2-3q^2 }\in\bbn$, then the Sasaki-Einstein structure on $Y^{p.q}$ is quasi-regular \cite{GMSW04a}. Indeed, the $\eta$-Einstein structure corresponds to a ray in the so-called $\bfw$ subcone is determined by
co-prime solutions $(v^0_2,v^\infty_2)$ of
\begin{equation}\label{YpqisEinstein}
 \int_{-1}^1 ((v^0_2-v^\infty_2)-(v^0_2+v^\infty_2)\gz)(((p+q)v_2^\infty + (p-q)v^0_2)+((p+q)v_2^\infty - (p-q)v^0_2)\gz)\,d\gz=0
\end{equation}
(from e.g. (68) in \cite{BoTo14a}).  Following Theorem 3.8 in \cite{BoTo14a}, for any choice of quasi-regular ray determined by the
co-prime pair $(v^0_2,v^\infty_2)$, the quotient Hirzebruch orbifold is $(\calh_a,\grD_{\bfm_2})$ where
$\grD_{\bfm_2} = (1-\frac{1}{m_2^0})D_{v_2} +(1-\frac{1}{m_2^\infty})D_{u_2}$ is the branch divisor with 
\begin{equation}\label{step1data}
\begin{array}{ccl}
{\bfm}_2 & = & (m^0_2,m^\infty_2)=m_2(v^0_2,v^\infty_2),\\
\\
m_2& = & \frac{p}{\gcd(p, |\frac{p+q}{l} v^\infty_2-\frac{p-q}{l} v^0_2|)},\\
\\
a & = &\frac{(p+q)m^\infty_2-(p-q)m^0_2}{p}.
\end{array}
\end{equation}

The join $M_{l_1,l_2,\bfw}=Y^{p,q}\star_{l_1,l_2}S^3_\bfw$, where $Y^{p,q}$ has a quasi-regular Sasaki structure as above, can then be obtained as the quotient of the following $\bbt^2$ action on $S^3\times S^3\times S^3$:
\begin{equation}\label{T2act}
(x,u;u_1,u_2;z_1,z_2)\mapsto (x,e^{ipl_2\theta}u;e^{i(l_2m_2v^0_2\phi-(p+q)\theta)}u_1, e^{i(l_2m_2v^\infty_2\phi-(p-q)\theta)}u_2;e^{-il_1w_1\phi}z_1,e^{-il_1w_2\phi}z_2).
\end{equation}
First we notice that without loss of generality we can assume that $\gcd(l_1,m_2)=1$, for otherwise we can redefine $\phi$. So our gcd conditions are $\gcd(l_1,l_2m_2)=1$, $\gcd(w_1,w_2)=1=\gcd(v^0_2,v^\infty_2)$, and $\gcd(p,q)=1$. Note that $p+q$ and $p-q$ can have a common factor if and only if both $p$ and $q$ are odd, in which case the common factor is $2$. However, $M_{l_1,l_2,\bfw}$ may not be a smooth manifold. we have

\begin{lemma}\label{smoothquot}
The join $M_{l_1,l_2,\bfw}=Y^{p,q}\star_{l_1,l_2}S^3_\bfw$, with Sasakian structure on $Y^{p,q}$ given by the Reeb field $\xi_{{\bfv}_2}$ with ${\bfv}_2=(v^0_2,v^\infty_2)$, is a smooth manifold if and only if $\gcd(l_2m_2v^i_2,l_1w_j)=1$ for $i=0,\infty$ and $j=1,2$, where
$m_2= \frac{p}{\gcd(p, |\frac{p+q}{l} v^\infty_2-\frac{p-q}{l} v^0_2|)}$.
\end{lemma}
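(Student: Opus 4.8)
The plan is to recast smoothness of the join as freeness of a circle action on a smooth manifold, and then to locate the non‑free locus from the known orbifold quotients of the two factors. By construction $M_{l_1,l_2,\bfw}=(Y^{p,q}\times S^3_\bfw)/S^1$, where $S^1$ acts on $Y^{p,q}$ through $l_2$ times its (quasi‑regular) Reeb circle $S^1_{\xi_{\bfv_2}}$ and on $S^3_\bfw$ through $l_1$ times its Reeb circle $S^1_{\xi_\bfw}$; equivalently it is the quotient of $S^3\times S^3\times S^3$ by the $\bbt^2$‑action \eqref{T2act}. Since $\gcd(p,q)=1$ and $l=\gcd(p+q,p-q)$, the space $Y^{p,q}$ is a smooth manifold (it is $S^2\times S^3$), so $Y^{p,q}\times S^3_\bfw$ is smooth; the join circle always acts locally freely, so $M_{l_1,l_2,\bfw}$ is a smooth manifold if and only if this $S^1$ acts freely (a free compact group action on a manifold has a manifold quotient, while a genuinely non‑free locally free $S^1$ produces an orbifold singularity).

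Next I would compute the isotropy groups. An element $\lambda\in S^1$ fixes $(y,z)$ exactly when $\lambda^{l_2}$ lies in the isotropy $\bbz_{d_1(y)}$ of $S^1_{\xi_{\bfv_2}}$ at $y$ and $\lambda^{l_1}$ lies in the isotropy $\bbz_{d_2(z)}$ of $S^1_{\xi_\bfw}$ at $z$, so the isotropy of $S^1$ at $(y,z)$ is $\bbz_{\gcd(l_2 d_1(y),\,l_1 d_2(z))}$. By Theorem~3.8 of \cite{BoTo14a} together with \eqref{step1data}, $Y^{p,q}/S^1_{\xi_{\bfv_2}}=(\calh_a,\grD_{\bfm_2})$ with $m_2^0=m_2 v_2^0$ and $m_2^\infty=m_2 v_2^\infty$, and its orbifold locus is exactly the two disjoint sections $D_{v_2},D_{u_2}$ of $\calh_a\to\bbc\bbp^1$, with transverse uniformizing groups $\bbz_{m_2^0},\bbz_{m_2^\infty}$; since there is no ramification along $D_{v_1},D_{u_1}$ at this stage, at the toric corners the quotient is locally $\bbc\times(\bbc/\bbz_{m_2^i})$ and nothing larger appears. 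Thus $d_1(y)$ ranges exactly over $\{1,m_2 v_2^0,m_2 v_2^\infty\}$. Likewise $S^3_\bfw/S^1_{\xi_\bfw}=\bbc\bbp^1[w_1,w_2]$ has orbifold locus its two fixed points with groups $\bbz_{w_1},\bbz_{w_2}$, so $d_2(z)$ ranges exactly over $\{1,w_1,w_2\}$.

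Finally I would assemble: $S^1$ acts freely iff $\gcd(l_2 d_1,\,l_1 d_2)=1$ for all $d_1\in\{1,m_2 v_2^0,m_2 v_2^\infty\}$ and all $d_2\in\{1,w_1,w_2\}$. The four conditions $\gcd(l_2 m_2 v_2^i,\,l_1 w_j)=1$ with $i\in\{0,\infty\}$, $j\in\{1,2\}$ are manifestly necessary, and they force the remaining five: since $l_2\mid l_2 m_2 v_2^i$ and $l_1\mid l_1 w_j$, each of $\gcd(l_2 m_2 v_2^i,l_1)$, $\gcd(l_2,l_1 w_j)$ and $\gcd(l_2,l_1)$ divides one of the four and hence equals $1$. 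This is exactly the stated criterion; the standing assumptions $\gcd(l_1,l_2 m_2)=1$ and $\gcd(w_1,w_2)=\gcd(v_2^0,v_2^\infty)=\gcd(p,q)=1$ may be used freely along the way.

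The step I expect to be the real obstacle is the isotropy computation — certifying that the non‑free locus of $S^1_{\xi_{\bfv_2}}$ on $Y^{p,q}$ is no larger than $D_{v_2}\cup D_{u_2}$ with cyclic stabilizers of precisely the orders $m_2 v_2^0,m_2 v_2^\infty$, and that no larger stabilizer hides over a toric corner. If one prefers to avoid the orbifold language entirely, the same stabilizer orders can instead be read off directly from the exponents in \eqref{T2act}: a point with $u_1,u_2,z_1,z_2$ all nonzero has trivial $\bbt^2$‑stabilizer using $\gcd(p,q)=1$ and $\gcd(w_1,w_2)=1$, while the stabilizers along the coordinate subspaces reproduce $\bbz_{m_2 v_2^0},\bbz_{m_2 v_2^\infty},\bbz_{w_1},\bbz_{w_2}$ — the computation is longer but elementary.
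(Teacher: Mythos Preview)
Your proof is correct. The paper's own proof is much terser: it invokes Proposition~7.6.7 of \cite{BG05}, which states that a join $M\star_{l_1,l_2}M'$ of quasi-regular Sasaki manifolds is smooth if and only if $\gcd(l_1\Upsilon_2,l_2\Upsilon_1)=1$, where $\Upsilon_i$ is the orbifold order of the $i$th factor, and then simply records $\Upsilon_1=m_2v_2^0v_2^\infty$ (from \eqref{step1data}) and $\Upsilon_2=w_1w_2$. Your argument is a self-contained unpacking of exactly that proposition: you compute the isotropies of the join circle directly and identify the maximal stabilizer orders on the two factors as $m_2v_2^0,\,m_2v_2^\infty$ and $w_1,\,w_2$, which is precisely what the orders $\Upsilon_1,\Upsilon_2$ encode (as the lcm, here the product thanks to the coprimality of $v_2^0,v_2^\infty$ and of $w_1,w_2$). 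The single condition $\gcd(l_1w_1w_2,\,l_2m_2v_2^0v_2^\infty)=1$ and your four conditions $\gcd(l_2m_2v_2^i,\,l_1w_j)=1$ are equivalent, as your divisibility remark shows. The paper buys brevity by citing \cite{BG05}; your route is more elementary and makes transparent where each of the four gcd conditions originates and why nothing larger than $\bbz_{m_2v_2^i}$ appears over the toric corners of $\calh_a$.
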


\begin{proof}
From Proposition 7.6.7 of \cite{BG05} $M_{l_1,l_2,\bfw}$ is smooth if and only if $\gcd(l_1\Upsilon_2,l_2\Upsilon_1)=1$ where $\Upsilon_1$ is the order of $Y^{p,q}$ and $\Upsilon_2$ is the order of $S^3_\bfw$. The latter is $\Upsilon_2=w_1w_2$ with $w_1,w_2$ coprime. The order $\Upsilon_1$ with quasi-regular Reeb field $\xi_{\bfv_2}$
is $\Upsilon_1=m_2v^0_2v^\infty_2$.
\end{proof}

The analysis in Section 3 of \cite{BoTo14a} holds equally well when the manifold $M$ in the join $M\star_{l_1,l_2}S^3_\bfw$ has any quasi-regular Sasakian structure. The major difference is having more complicated computations. For example we need the Fano index of of $Y^{p,q}$. As described above, the quotient of any quasi-regular Sasakian structure in the $\bfw$ subcone of $Y^{p,q}$ is a Hirzebruch orbifold of the form $(\calh_a,\grD_{{\bfm}_2})$. We have
\begin{lemma}\label{Ypqind}
Let $\xi_{{\bfv}_2}$ be a quasiregular Reeb vector field with quotient Hirzebruch orbifold $(\calh_a,\grD_{{\bfm}_2})$ with $a>0$. Then its Fano index $\cali_{\bfv_2}$ is given by 
$$\cali_{\bfv_2}=\gcd(2m_2v^0_2+a)v^\infty_2,v^0_2+v^\infty_2)$$
where $\bfm_2=m_2(v^0_2,v^\infty_2)$ and $v^0_2,v^\infty_2$ are coprime.
\end{lemma}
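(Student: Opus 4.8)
The plan is to compute the Fano index $\cali_{\bfv_2}$ directly from the orbifold first Chern class of the quotient Hirzebruch orbifold $(\calh_a,\grD_{\bfm_2})$, using the specialization of formula \eqref{c1orb} to the $n=2$ case. Recall that the Fano index of a log Fano orbifold is the largest positive integer $\cali$ such that $c_1^{orb}/\cali$ is an integral cohomology class in the orbifold Picard group; equivalently, after writing $c_1^{orb}$ in a convenient integral basis of $H^2$, it is the greatest common divisor of the coefficients. For the Hirzebruch orbifold the relevant data are $\bfm_2 = m_2(v^0_2,v^\infty_2)$ and the integer $a$ from \eqref{step1data}, so $m_2^0 = m_2 v^0_2$ and $m_2^\infty = m_2 v^\infty_2$.

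First I would specialize \eqref{c1orb1}–\eqref{c1orb4} (or rather the stage-2 truncation coming from $\grD_{\bfm_2}$) to get $c_1^{orb}(\calh_a,\grD_{\bfm_2})$ in the $\{x_1,x_2\}$ basis: the $x_2$-coefficient is $\frac{1}{m_2^0}+\frac{1}{m_2^\infty} = \frac{v^0_2+v^\infty_2}{m_2 v^0_2 v^\infty_2}$, and the $x_1$-coefficient is $\frac{1}{m_1^0}+\frac{1}{m_1^\infty}+\frac{a}{m_2^0}$. Here one must be careful: the $x_1$ factor corresponds to the base $\bbc\bbp^1$ of $\calh_a$, which carries \emph{no} orbifold ramification in this join construction (the branch divisor $\grD_{\bfm_2}$ lives only along $D_{v_2},D_{u_2}$), so effectively $m_1^0 = m_1^\infty = 1$ and that coefficient is $2 + \frac{a}{m_2 v^0_2}$. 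To extract the index I would clear denominators: multiplying through by $m_2 v^0_2 v^\infty_2$, the class $c_1^{orb}$ is proportional to $\bigl(2m_2 v^0_2 v^\infty_2 + a v^\infty_2\bigr)x_1 + (v^0_2+v^\infty_2)x_2$ up to the correct normalization of the orbifold Picard lattice, and the index is the gcd of these two coefficients, i.e. $\gcd\bigl((2m_2 v^0_2 + a)v^\infty_2,\ v^0_2+v^\infty_2\bigr)$, matching the claimed formula (the parenthesization in the statement appears to have a typo but the intended expression is $\gcd((2m_2 v^0_2 + a)v^\infty_2,\ v^0_2+v^\infty_2)$).

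The main obstacle — and the step requiring genuine care rather than routine calculation — is justifying that this naive gcd of coefficients really computes the \emph{orbifold} Fano index, i.e. identifying the correct integral lattice $H^2_{orb}(\calh_a,\grD_{\bfm_2};\bbz)$ inside $H^2(\calh_a;\bbq)$ and checking that $c_1^{orb}$ divided by the proposed $\cali_{\bfv_2}$ is genuinely a primitive class of orbifold line bundles. Concretely, one needs the description of $H^2_{orb}$ of a Hirzebruch orbifold with branch divisor along the fiber-type divisors $D_{v_2},D_{u_2}$ with ramification indices $m_2 v^0_2$ and $m_2 v^\infty_2$: the orbifold Picard group is generated by the pullback of $\calO_{\bbc\bbp^1}(1)$ together with the orbifold line bundles $\calO(\tfrac{1}{m_2 v^0_2}D_{v_2})$ and $\calO(\tfrac{1}{m_2 v^\infty_2}D_{u_2})$ subject to the Bott-tower relation $D_{v_2}\sim aD_{u_1}+D_{u_2}$. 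One must also verify the normalization implicit in \eqref{step1data}: that $a$ as defined there is indeed the twist of the quotient and that $\gcd(v^0_2,v^\infty_2)=1$, so that $m_2^0,m_2^\infty$ are the true ramification indices and no extra common factor is hidden. Once the lattice is pinned down, the gcd computation is immediate; I would cross-check against the regular case (e.g. $v^0_2=v^\infty_2=1$, $m_2=1$, recovering the Fano index $\gcd(2+a,2)$ of $\calh_a$) and against the known answer for $Y^{p,q}$ itself to confirm the formula.
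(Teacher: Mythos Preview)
Your proposal is correct and follows essentially the same route as the paper: compute $c_1^{orb}(\calh_a,\grD_{\bfm_2})$ in the $\{x_1,x_2\}$ basis using $m_1^0=m_1^\infty=1$, clear the denominator $m_2v_2^0v_2^\infty$, and read off the gcd of the resulting integer coefficients. The only difference is in how the ``main obstacle'' you flag is resolved: rather than describing the orbifold Picard lattice by generators and relations as you suggest, the paper simply invokes the classifying map $p:\mathsf{B}(\calh_a,\grD_{\bfm_2})\to \calh_a$, which is an $m_2v_2^0v_2^\infty$-fold cover, so that $p^*c_1^{orb}=(2m_2v_2^0+a)v_2^\infty\,p^*x_1+(v_2^0+v_2^\infty)\,p^*x_2$ lands directly in the integral lattice $H^2(\mathsf{B}(\calh_a,\grD_{\bfm_2}),\bbz)$ and the Fano index is by definition the gcd of these coefficients.
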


\begin{proof}
Recall (Definition 4.4.24 of \cite{BG05}) that the Fano index $\cali$ of an orbifold $\calz$ is the largest positive integer $k$ such that $p^*c_1^{orb}/k$ is an element of $H^2_{orb}(\calz,\bbz)=H^2(B\calz,\bbz)$. Now the classifying map $p:B\calz\ra{1.8} \calz$ is an $m_2v^0_2v^\infty_2$-fold cover, and $c_1^{orb}$ is 
\begin{equation}\label{c1orbeqn}
c_1^{orb}=(2+\frac{a}{m_2v^0_2}) x_1+\frac{v^0_2+v^\infty_2}{m_2v^0_2v^\infty_2}x_2=\frac{1}{m_2v^0_2v^\infty_2}\bigl((2m_2v^0_2+a)v^\infty_2 x_1+(v^0_2+v^\infty_2)x_2\Bigr).
\end{equation}
So $p^*c_1^{orb}=(2m_2v^0_2+a)v^\infty_2 p^*x_1+(v^0_2+v^\infty_2)p^*x_2$ from which the result follows.
\end{proof}

From now on we assume that $p>q\geq 1$ are such that $\sqrt{4p^2-3q^2 }\in\bbn$ and $Y^{p,q}$ has the quasi-regular Sasaki-Einstein structure. Thus co-prime $(v^0_2,v^\infty_2)$ are chosen such that  \eqref{YpqisEinstein} is satisfied. We then present a
version of Theorem 3.8 of \cite{BoTo14a} that allows the join $M_{l_1,l_2,\bfw}=Y^{p,q}\star_{l_1,l_2}S^3_\bfw$

\begin{theorem}\label{38thm}
Consider the join $M_{l_1,l_2,\bfw}=Y^{p,q}\star_{l_1,l_2}S^3_\bfw$ where $Y^{p,q}$ has a quasi-regular Sasaki-Einstein structure
determined by the co-prime pair $(v^0_2,v^\infty_2)$ satisfying Equation \eqref{YpqisEinstein}.
Let $\cals_{\bfv_3}=(\xi_{\bfv_3},\eta_{\bfv_3},\Phi_{\bfv_3},g_{\bfv_3})$ be a quasi-regular Sasakian structure that lies in the $\bfw$ subcone of the Sasaki cone with ${\bfv_3}=(v^0_3,v^\infty_3)$ where $v^0_3,v^\infty_3$ are coprime. Then the quotient of $M_{l_1,l_2,\bfw}$ by the $S^1$ action generated by $\xi_{\bfv_3}$ is the Bott orbifold given by the log pair $(M_3(a,b,c),\grD_\bfm)$, where $a$ is determined by \eqref{step1data}, 
$$
\begin{array}{ccccl}
b& = & n\hat{b} & = & n\frac{(2m_2v^0_2+a)v^\infty_2}{\cali_{\bfv_2}}\\
\\
c& = & n\hat{c} &= & n\frac{(v^0_2+v^\infty_2)}{\cali_{\bfv_2}},
\end{array}
$$
$n  =  l_1\frac{w_1 v^\infty_3-w_2 v^0_3}{\gcd(|w_1 v^\infty_3-w_2 v^0_3|,l_2)}$, 
$\cali_{\bfv_2}$ is given by Lemma \ref{Ypqind},
and $\grD_\bfm$ is the branch divisor
$$\Bigl(1-\frac{1}{m_1^0}\Bigr)D_{v_1}+\Bigl(1-\frac{1}{m_1^\infty}\Bigr)D_{u_1} +\Bigl(1-\frac{1}{m_2^0}\Bigr)D_{v_2}+\Bigl(1-\frac{1}{m_2^\infty}\Bigr)D_{u_2} +\Bigl(1-\frac{1}{m_3^0}\Bigr)D_{v_3}+\Bigl(1-\frac{1}{m_3^\infty}\Bigr)D_{u_3}$$
with $(m^0_1,m^\infty_1)=(1,1)$, $(m^0_2,m^\infty_2)$ given by \eqref{step1data}, and
$$(m^0_3,m^\infty_3)=m_3(v^0_3,v^\infty_3) =\frac{l_2}{\gcd(|w_1 v^\infty_3-w_2 v^0_3|,l_2)}(v^0_3,v^\infty_3).$$
\end{theorem}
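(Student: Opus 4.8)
The plan is to realize the iterated quotient as a single $\bbt^3$-quotient of $S^3\times S^3\times S^3$ and then identify it, via an explicit integral change of basis, with the standard stage-3 Bott tower action \eqref{Bottaction}; the matrix entries $a,b,c$ and the six ramification indices are then read off from the $\bbt^3$-weights and the orders of their isotropy subgroups along the invariant divisors. Concretely, one augments the $\bbt^2$-action \eqref{T2act} defining $M_{l_1,l_2,\bfw}=Y^{p,q}\star_{l_1,l_2}S^3_\bfw$ by the flow of $\xi_{\bfv_3}$. Since $\xi_{\bfv_3}$ lies in the $\bfw$-subcone of the Sasaki cone, it is --- up to the $l$-scalings already present in \eqref{T2act} --- the quasi-regular Reeb field deforming along the maximal torus of the $S^3_\bfw$ factor, so the new circle acts on the last $S^3$ with weights proportional to $(v^0_3,v^\infty_3)$, and one obtains a $\bbt^3$-action whose $3\times 6$ weight matrix is written down explicitly.

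It is convenient to reduce in two stages. Quotienting the first two $S^3$-factors by the subtorus coming from $Y^{p,q}$ (cf.\ \eqref{Ypqjoin}, \eqref{YpqisEinstein}) gives, exactly as recalled in \eqref{step1data} following Theorem~3.8 of \cite{BoTo14a}, the Hirzebruch orbifold $(\calh_a,\grD_{\bfm_2})$ with $a$ and $\bfm_2=m_2(v^0_2,v^\infty_2)$ as there. It then remains to pin down the $\bbc\bbp^1$-orbibundle structure of the full stage-3 quotient over $(\calh_a,\grD_{\bfm_2})$, i.e.\ to compute the class of the orbifold line bundle $\call_3$ with $M_3(a,b,c)=\bbp(\BOne\oplus\call_3)$, since then $c_1(\call_3)=b\,x_1+c\,x_2$ in the basis $\{x_1,x_2\}$ pulled back from $\calh_a$.

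The crux is this last computation. By the join/reduction formalism of Section~3 of \cite{BoTo14a}, quotienting $Y^{p,q}\star_{l_1,l_2}S^3_\bfw$ by $\xi_{\bfv_3}$ exhibits it as the total space of a $\bbc\bbp^1$-orbibundle over the $\xi_{\bfv_2}$-quotient $(\calh_a,\grD_{\bfm_2})$ of $Y^{p,q}$, whose twisting class is an integer multiple of the primitive integral class proportional to $c_1^{orb}(\calh_a,\grD_{\bfm_2})$. By \eqref{c1orbeqn} and Lemma \ref{Ypqind}, that primitive class is $\hat b\,x_1+\hat c\,x_2$ with $\hat b=(2m_2v^0_2+a)v^\infty_2/\cali_{\bfv_2}$ and $\hat c=(v^0_2+v^\infty_2)/\cali_{\bfv_2}$ (here $\gcd(\hat b,\hat c)=1$ since $\cali_{\bfv_2}=\gcd((2m_2v^0_2+a)v^\infty_2,v^0_2+v^\infty_2)$). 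The integer scaling $n$ is the gcd-reduced degree attached to the third circle in the $l_1,l_2$-join with weight $\bfw$ and Reeb $\bfv_3$, namely $n=l_1(w_1v^\infty_3-w_2v^0_3)/\gcd(|w_1v^\infty_3-w_2v^0_3|,l_2)$, which comes out of the same ``order'' bookkeeping used in Lemma \ref{smoothquot}. Hence $b=n\hat b$ and $c=n\hat c$, and substituting $a,b,c$ into \eqref{Bottaction} and comparing with the $\bbt^3$-weight matrix of the first step, after the $\mathrm{GL}(3,\bbz)$ change of basis absorbing the sections $\grs_j^0,\grs_j^\infty$, confirms the match.

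Finally, the ramification index $m_j^0$ (resp.\ $m_j^\infty$) is the order of the isotropy subgroup of the $\bbt^3$-action along $D_{v_j}$ (resp.\ $D_{u_j}$). The left-hand $S^3$ in \eqref{Ypqjoin} carries the standard regular Sasakian structure, forcing $(m_1^0,m_1^\infty)=(1,1)$; the middle pair $(m_2^0,m_2^\infty)=m_2(v^0_2,v^\infty_2)$ is inherited from the $Y^{p,q}$-quotient \eqref{step1data}; and the last pair is the isotropy of the $\xi_{\bfv_3}$-circle on $S^3_\bfw$, reduced by its common factor with $l_2$, giving $(m_3^0,m_3^\infty)=\frac{l_2}{\gcd(|w_1v^\infty_3-w_2v^0_3|,l_2)}(v^0_3,v^\infty_3)$. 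Assembling the four pieces identifies the quotient with the log pair $(M_3(a,b,c),\grD_\bfm)$. I expect the main obstacle to be the bookkeeping in the third step: arranging the $\mathrm{GL}(3,\bbz)$ change of basis and the several gcd reductions so that the twisting class is \emph{exactly} $n(\hat b\,x_1+\hat c\,x_2)$ with no spurious extra factor --- equivalently, keeping the ``order'' of the quasi-regular Reeb $\xi_{\bfv_3}$ and the interplay of $l_1,l_2,\bfw,\bfv_3$ consistent with the smoothness/orbifold conventions of \cite{BG05} and \cite{BoTo14a}. By contrast $a$ and the stage-2 data are direct quotations, so the genuinely new content is the stage-3 twisting class and its orbifold refinement.
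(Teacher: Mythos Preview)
Your outline is correct and lands on the same formulas by essentially the same two-stage reduction the paper uses: first pass to the Hirzebruch orbifold $(\calh_a,\grD_{\bfm_2})$ via \eqref{step1data}, then identify the stage-3 quotient as a $\bbc\bbp^1$-orbibundle whose twisting class is $n$ times the primitive class $\hat b\,x_1+\hat c\,x_2$ coming from $c_1^{orb}(\calh_a,\grD_{\bfm_2})/\cali_{\bfv_2}$, with $n$ and $\bfm_3$ read off from the join data exactly as you write.

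The one genuine difference is packaging. You propose to write the full $\bbt^3$-action on $S^3\times S^3\times S^3$ as a $3\times 6$ weight matrix and then exhibit an explicit $\mathrm{GL}(3,\bbz)$ change of basis matching it to the Bott-tower action \eqref{Bottaction}. The paper instead works fibration-theoretically: it sets up the commutative diagram \eqref{s2comdia}, invokes Lemma~3.6 and Theorem~3.8 of \cite{BoTo14a} verbatim (with $M=Y^{p,q}$, $N=(\calh_a,\grD_{\bfm_2})$) to get $s,m_3,n$ as in \eqref{m3eqns}, recognizes the quotient as $\bbp(\BOne\oplus L^n)$, and then isolates the one new ingredient as a short lemma (Lemma~\ref{orbilinebundle}) saying that the pullback of $c_1(L^n)$ to $M_3(a,b,c)$ is $b\,x_1+c\,x_2$; equating this with $n$ times \eqref{KahB} gives $b,c$. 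Your toric route is more self-contained but requires carrying the gcd bookkeeping through the matrix manipulation by hand---precisely the obstacle you flag. The paper's route offloads that bookkeeping to \cite{BoTo14a} and reduces the new content to the single Chern-class identification, which is cleaner if one is willing to cite; your approach would be preferable if one wanted an argument independent of the specific formulation in \cite{BoTo14a}.
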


\begin{proof}
We can follow the proof of Theorem 3.8 of \cite{BoTo14a} with the caveat that $N$ is a Hirzebruch orbifold $(\calh_a,\grD_{\bfm_2})$. As in Equation (3) of \cite{BoTo14a} we have the commutative diagram
\begin{equation}\label{s2comdia}
\begin{CD}  S^3\times S^3\times S^3 &&& \\
                           \decdnar{} &&& \\
                           Y^{p,q}\times S^3_\bfw &&& \\
                          &\searrow\pi_L &&& \\
                          \decdnar{\pi_{2}} && Y^{p,q}\star_{l_1,l_2,\bfw}S^3_\bfw &\\
                          &\swarrow\pi_1 && \\
                      (\calh_a,\grD_{\bfm_2})\times\bbc\bbp^1[\bfw] &&& 
\end{CD}
\end{equation}
where the $\pi$s are the obvious projections, and the orbifold $(\calh_a,\grD_{\bfm_2})$ is the quotient by the locally free $S^1$ action on $Y^{p,q}$ generated by the quasi-regular Reeb vector field $\xi_{\bfm_2}$ where $\bfm_2=m_2(v_2^0,v_2^\infty)$ and $a$ is given in Equations \eqref{step1data}. The holomorphic line bundle $L_n=L^n$ now becomes the holomorphic line orbibundle with $L$ determined by the ``primitive'' K\"ahler class in $H^2((\calh_a,\grD_\bfm),\bbq)$, namely
\begin{equation}\label{primKah}
\frac{c_1^{orb}(\calh_a,\grD_{\bfm_2})}{\cali_{\bfv_2}}=\frac{1}{m_2v^0_2v^\infty_2}\frac{\bigl((2m_2v^0_2+a)v^\infty_2 x_1+(v^0_2+v^\infty_2)x_2\Bigr)}{\cali_{\bfv_2}}.
\end{equation}
We make note of the fact that $Y^{p,q}\ra{1.8} (\calh_a,\grD_{\bfm_2})$ is an $m_2v_2^0v_2^\infty$-fold covering map.

Now as in Section 3.5 of \cite{BoTo14a} we want to describe the base orbifold $B_{\bfl,\bfv,\bfw}$ of the $S^1$ orbibundle generated by the locally free action of the quasi-regular Reeb vector field $\xi_{\bfm_3}$ of the SE structure on the join $Y^{p,q}\star_{\bf \ell}S^3_\bfw$. We see that the analysis of Section 3.5 of \cite{BoTo14a} goes through verbatim through Remark 3.7 with $M=Y^{p,q}$ and $N=(\calh_a,\grD_{\bfm_2})$. In particular, from Lemma 3.6 of \cite{BoTo14a} we obtain the base orbifold       $B_{{\bf \ell},\bfv,\bfw}\approx (B_{{\bf \ell},1,\bfw'},\grD)$ with 
$$\grD=\Bigl(1-\frac{1}{m_3^0}\Bigr)D_{v_3}+\Bigl(1-\frac{1}{m_3^\infty}\Bigr)D_{u_3},~~\bfw'=(v_3^\infty w_1, v_3^0 w_2),$$
and from Theorem 3.8 of \cite{BoTo14a} we have
\begin{equation}\label{m3eqns}
\bfm_3=m_3(v_3^0,v_3^\infty),~s=\gcd(|w_1v_3^\infty-w_2v_3^0|,l_2),~l_2=sm_3,~n=\frac{l_1}{s}(w_1v_3^\infty-w_2v_3^0).
\end{equation}
Then from the proof of Theorem 3.8 we see that the quotient is the total space of the projective orbibundle $\bbp(\BOne\oplus L^n)$ over $(\calh_a,\grD_{\bfm_2})$ whose invariant divisors are generally branch divisors of an orbifold. But then using Lemma \ref{orbquot} this is precisely a stage 3 Bott orbifold $(M_3(a,b,c),\grD_\bfm)$ for some $b,c$ and where $a$ is given in Equations \eqref{step1data}. The fact that $Y^{p,q}$ as a join has the form of Equation \eqref{Ypqjoin} with the standard regular Sasakian structure on the first $S^3$ implies that the ramification indices $\bfm_1=(1,1)$. 

It remains to check the equations for $b$ and $c$. For this we make use of an orbifold version of Proposition 1.5 in \cite{BoCaTo17}. Explicitly, we have
\begin{lemma}\label{orbilinebundle}
The pullback of $c_1(L^n)$ of the orbifold line bundle $L^n$ to $M_3(a,b,c)$ is $bx_1+cx_2$ where $n$ is given in Equations \eqref{m3eqns}.
\end{lemma}

We know that $L^n$ is the nth tensor product of the line orbibundle $L$ which is determined by the K\"ahler class 
\begin{equation}\label{KahB}
\frac{c_1^{orb}(\calh_a,\grD_{\bfm_2})}{\cali_{\bfv_2}}=\frac{\bigl((2m_2v^0_2+a)v^\infty_2 x_1+(v^0_2+v^\infty_2)x_2\Bigr)}{m_2v^0_2v^\infty_2\cali_{\bfv_2}}.
\end{equation}
Now the projection $p:(M_3(a,b,c),\grD)\ra{1.8} (\calh_a,\grD_{\bfm_2})$ is a $m_2v^0_2v^\infty_2$-fold covering map. So pulling back we have $c_1(L^n)=nc_1(L)=np^*\left(\frac{c_1^{orb}(\calh_a,\grD_{\bfm_2})}{\cali_{\bfv_2}}\right)$. The equations for $b$ and $c$ then follow by equating coefficients in this and in Lemma \ref{orbilinebundle}.
\end{proof}

\begin{remark}\label{qdiv}
The Poincar\'e dual to $c_1^{orb}(M_3(a,b,c),\grD_\bfm)$ is a $\bbq$-divisor on $M_3(a,b,c)$ which is ample when $c_1^{orb}(M_3(a,b,c),\grD_\bfm)$ is positive. Such a class gives a polarization to the orbifold $(M_3(a,b,c),\grD_\bfm)$, and a $\bbt^3$ invariant orbifold 2-form representing $c_1^{orb}(M_3(a,b,c),\grD_\bfm)$ gives an orbifold K\"ahler metric $g_{a,b,c,\bfm}$ on $M_3(a,b,c)$.
\end{remark}

\begin{remark}
Note that the real cohomology class $c_1^{orb}(M_3(a,b,c),\grD_\bfm)$ makes perfect sense for $m_j^0,m_j^\infty\in\bbr^+$, and we denote the set of all such classes by $\gB\gL_A$. In this case $(M_3(a,b,c),\grD_\bfm)$ can be understood as having K\"ahler metrics with conical singularities along the corresponding $\bbr$-divisors $D_{v_i}$ and $D_{u_i}$ with cone angle $\frac{2\pi}{m_i^0}$ and $\frac{2\pi}{m_i^\infty}$, respectively \cite{Don12,ChDoSu15I}. By this we mean that there is a K\"ahler metric $\gro$ which is smooth on $M_n(a,b,c)\setminus \grD_\bfm$ which extends to $M_n(a,b,c)$ as a closed positive $(1,1)$ current satisfying certain uniformity requirements. See Definition 1.3 of \cite{ChDoSu15I} for the precise statement. For arbitrary $\bfm$ we say that $c_1^{orb}(M_3(a,b,c),\grD_\bfm)$ represents a cone singularity along the divisor $\grD_\bfm$. The rational entries in the interval $(0,1)$ are related to the so-called `ramifolds' \cite{RoTh11}. As in this reference we shall also assume hereafter that $m_j^0,m_j^\infty\in (0,\infty)$. We denote by $\gL\gF_A$ the subset of $\gB\gL_A$ whose K\"ahler metrics are log Fano and have cone singularities along the divisor $\grD_\bfm$. Then the natural map from $\gL\calf_A$ to the K\"ahler cone $\calk(M_3(a,b,c))$ is surjective, and the conclusion of Lemma \ref{orbFano} holds for all $(M_3(a,b,c),\grD_\bfm)\in \gL\gF_A$.
\end{remark}

\begin{remark}
We consider the action of the affine monoid $\gM(\bbr)$ on $(\bbr^+)^6$ defined by the affine linear map 
\begin{equation}\label{affmap}
(m_j^0,m_j^\infty)\mapsto (\grl_j^0m_j^0+a_j^0,\grl_j^\infty m_j^\infty+a_j^\infty)=(\tilde{m}_j^0,\tilde{m}_j^\infty)
\end{equation} 
with $1\leq \grl_j<\infty$ and $0\leq a_j<\infty$. Restricting to the positive integers gives an action of the submonoid $\gM(\bbz)$ on $(\bbz^+)^6$. 
One easily checks that this induces an action of $\gM(\bbr)$ on $\gB\gL$ that leaves the subset $\gL\gF_A$ invariant for all $\grl_j^0,\grl_j^\infty\in [1,\infty)$ and $a_i^0,a_j^\infty\in [0,\infty)$ sending $c_1^{orb}(M_3(a,b,c),\grD_\bfm)$ to $c_1^{orb}(M_3(a,b,c),\grD_{\tilde{\bfm}})$.  
\end{remark}

\section{The Topology of $M^7=Y^{p,q}\star_{l_1,l_2}S^3_\bfw$}
It is important to remember that generally the topology of a join depends on the choice of Sasakian structure (through its Reeb vector field) of each factor.
We assume that $(p,q)$ are relatively prime with $1\leq q<p$ and that $l_1,l_2,w_1,w_2$ are chosen such that $M^7$ is smooth. We show first that our Sasaki 7-manifolds $M^7$ have the rational cohomology of the 2-fold connected sum $(S^2\times S^5)\# (S^2\times S^5)$. The integer cohomology groups are only distinguished by torsion in $H^4$. Moreover, the torsion depends on the choice of quasi-regular Sasakian structures on $Y^{p,q}$ and $S^3$. For the most generality we choose arbitrary quasi-regular Sasakian structure in the so-called $\bfw$ subcone of the Sasaki cones for both $Y^{p,q}$ and $S^3$ (of course, the $\bfw$ cone of $S^3$ is its entire Sasaki cone).

First we note that any quotient of a quasi-regular Reeb vector field $\xi_\bfm$ in the $\bfw$ cone of $Y^{p,q}$ has the form of a Hirzebruch orbifold $(\calh_a,\grD_\bfm)$.
Moreover, $Y^{p,q}$ is itself the join $S^3\star_{l_1,p}S^3_\bfw$ with $\bfw=(\frac{p+q}{l_1},\frac{p-q}{l_1})$. Here $l_1=2$ if $p,q$ are both odd, and $l_1=1$ is $p,q$ have opposite parities. In any case the relation with the ramification indices is $\bfm=m(v^0,v^\infty)$ with $v^0,v^\infty$ coprime and $m=p$.

The purpose of this section is to prove
\begin{theorem}\label{maintopthm}
Let $Y^{p,q}$ have a quasi-regular Sasakian structure with Reeb vector field $\xi_\bfm$. Then the 7-manifolds $M^7=Y^{p,q}\star_{l_1,l_2}S^3_\bfw$ have the rational cohomology of the connected sum $(S^2\times S^5)\# (S^2\times S^5)$. Furthermore, the only torsion that occurs is $H^4(M^7,\bbz)\approx \bbz_{v^0v^\infty m^2l_2^2}\oplus \bbz_{w_1w_2l_1^2}$ .
\end{theorem}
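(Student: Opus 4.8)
The plan is to compute the cohomology of $M^7 = Y^{p,q}\star_{l_1,l_2}S^3_\bfw$ via the Gysin sequence of the $S^1$-orbibundle over the Bott orbifold $(M_3(a,b,c),\grD_\bfm)$ identified in Theorem \ref{38thm}, together with a Leray--Serre spectral sequence argument that reduces the computation to a knowledge of $H^*(M_3(a,b,c),\bbz)$ and the orbifold Euler class. Since $M_3(a,b,c)$ is a stage-$3$ Bott manifold, its integral cohomology is well known: it is a free $\bbz$-module concentrated in even degrees with Poincar\'e polynomial $(1+t^2)^3$, and a ring presentation in the generators $x_1,x_2,x_3$ (or any of the four dual bases \eqref{dualbases1}--\eqref{dualbases4}) is available from \cite{GrKa94,BoCaTo17}. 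The $S^1$-orbibundle whose total space is $M^7$ has orbifold Euler class $e = c_1^{orb}(M_3(a,b,c),\grD_\bfm)$, a positive (hence primitive up to the Fano index) class in $H^2$. First I would write out the Gysin sequence $\cdots \to H^{k-2}(M_3) \xrightarrow{\cup e} H^k(M_3) \to H^k(M^7) \to H^{k-1}(M_3) \xrightarrow{\cup e} H^{k+1}(M_3)\to\cdots$, being careful that for an orbibundle this is the Gysin sequence with the orbifold cohomology of the base; the discrepancy between $H^*_{orb}$ and $H^*(M_3,\bbz)$ contributes precisely the torsion we are chasing, via the fact that the classifying map $p\colon BM_3 \to M_3$ is an $m_2 v_2^0 v_2^\infty$-fold (and then, after the second join, an $(m^2 v^0 v^\infty \cdot w_1w_2 l_1^2 l_2^2)$-type) covering, as recorded in the proof of Theorem \ref{38thm} and Lemma \ref{Ypqind}.

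The cleaner route, which I expect to carry out, is to exploit the iterated-join structure directly. Because $Y^{p,q}=S^3\star_{l,p}S^3_\bfw$ and hence $M^7$ is an iterated join of three copies of $S^3$ (completely cone decomposable in the sense of \cite{BHLT16}), I would apply the join spectral sequence / Gysin sequence of \cite{BHLT16,BoTo14a} twice. The first join $Y^{p,q}=S^3\star_{l,p}S^3_\bfw$ recovers the known cohomology of $S^2\times S^3$; the second join with $S^3_\bfw$, of total weight data recorded in \eqref{T2act} and Theorem \ref{38thm}, produces a $7$-manifold. At the rational level the Gysin/Mayer--Vietoris argument for a join $X\star Y$ of rational cohomology spheres $S^3$ and $S^3$ (iterated) forces $H^*(M^7,\bbq)$ to agree with that of an $S^2$-bundle times an $S^5$-bundle; tallying the Betti numbers gives $b_0=b_7=1$, $b_2=b_5=2$, $b_3=b_4=0$ (all odd-degree middle groups vanish rationally), which is exactly $H^*\bigl((S^2\times S^5)\#(S^2\times S^5),\bbq\bigr)$. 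This settles the first assertion.

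For the torsion, the point is to track where the integral Gysin maps $\cup e$ fail to be injective or surjective. The orbifold Euler class $e$ restricted through the two bundle projections decomposes, by Lemma \ref{orbilinebundle} and \eqref{c1orb} (or \eqref{c1orbeqn}), into a ``$Y^{p,q}$-part'' whose denominator is $m^2 v^0 v^\infty$ times the join parameter $l_2$ squared, and an ``$S^3_\bfw$-part'' whose denominator is $w_1 w_2 l_1^2$; these are exactly the orders of the two orbifold structures being quotiented (compare Lemma \ref{smoothquot}, where $\Upsilon_1 = m_2 v_2^0 v_2^\infty$ and $\Upsilon_2 = w_1 w_2$). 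The cokernel of $\cup e$ on $H^2(M_3,\bbz)\to H^4(M_3,\bbz)$ — the only place torsion can appear, since everything else is free and the odd groups vanish — is then a finite abelian group, and diagonalizing the $3\times 3$ (in a suitable basis, block) integer matrix of $\cup e$ via Smith normal form yields $H^4(M^7,\bbz)\approx \bbz_{v^0 v^\infty m^2 l_2^2}\oplus \bbz_{w_1 w_2 l_1^2}$. The main obstacle, and the step demanding the most care, is precisely this Smith-normal-form / Gysin bookkeeping: one must choose the right integral basis of $H^2(M_3,\bbz)$ (one of \eqref{dualbases1}--\eqref{dualbases4}) so that the multiplication-by-$e$ matrix is block-triangular with the two prescribed diagonal blocks, use the coprimality hypotheses ($\gcd(l_1,l_2 m_2)=1$, $\gcd(w_1,w_2)=\gcd(v^0,v^\infty)=\gcd(p,q)=1$, and the smoothness gcd-conditions of Lemma \ref{smoothquot}) to show no further torsion factors or unexpected cancellations occur, and confirm that the cup-product structure of the Bott manifold does not introduce cross terms linking the two blocks. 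Once the matrix is in the right form the result reads off immediately, and Poincar\'e duality then propagates the answer to the remaining degrees, confirming that $H^4$ is the only group carrying torsion.
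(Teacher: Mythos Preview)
Your outline diverges from the paper's argument in both pieces, and the torsion step as written has a genuine gap. For the rational statement you assert $b_3=b_4=0$ by ``tallying Betti numbers,'' but this is exactly the nontrivial point: the paper first shows (via the $\bbt^2$-fibration $S^3\times S^3\times S^3\to M^7$) that $H^3$ is torsion-free and $b_3$ is even, and then runs the Leray--Serre spectral sequence of the $S^1$-orbibundle $M^7\to (M_3(a,b,c),\grD_\bfm)$ over $\bbq$, computing the $d_2$-differential as cup product with the K\"ahler class and checking that the resulting $3\times 3$ matrix has rank $3$ (rank $2$ being excluded by parity of $b_3$). Your sketch skips this rank/parity argument entirely.

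For the torsion, your plan to take the Smith normal form of ``the $3\times 3$ integer matrix of $\cup e$'' on $H^2(M_3,\bbz)\to H^4(M_3,\bbz)$ is not well-posed: the orbifold Euler class $e=c_1^{orb}$ is a \emph{rational} class in ordinary cohomology, so there is no integer matrix to diagonalize on $H^*(M_3,\bbz)$. The Gysin sequence you want lives over $H^*_{orb}$ of the stage-$3$ Bott orbifold, where $e$ is indeed integral, but then you would first have to compute $H^*_{orb}\bigl((M_3(a,b,c),\grD_\bfm),\bbz\bigr)$ --- which already carries torsion from all six branch divisors --- and your proposal gives no indication of how to do that or why the answer would block-decompose as you claim. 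The paper sidesteps this entirely: instead of fibering over the Bott orbifold, it uses the commutative diagram of fibrations
\[
\begin{matrix}
Y^{p,q}\times S^3_\bfw &\ra{2.6} & M^7& \ra{2.6}& \mathsf{B}S^1 \\
\decdnar{=}&&\decdnar{}&&\decdnar{\psi}\\
Y^{p,q}\times S^3_\bfw&\ra{2.6} &\mathsf{B}(\calh_a,\grD_\bfm) \times\mathsf{B}\bbc\bbp^1[\bfw]&\ra{2.6}
&\mathsf{B}S^1\times \mathsf{B}S^1
\end{matrix}
\]
with $\psi$ induced by $e^{i\theta}\mapsto(e^{il_2\theta},e^{-il_1\theta})$. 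The bottom row is a \emph{product} of two fibrations whose $d_4$-differentials on the $S^3$-orientation classes $\grb,\grg$ are known separately ($d_4\grb=m^2v^0v^\infty s_1^2$ and $d_4\grg=w_1w_2 s_2^2$, the first requiring a computation of $H^*_{orb}$ only for the \emph{Hirzebruch} orbifold, which the paper carries out); pulling back by $\psi^*s_1=l_2 s$, $\psi^*s_2=-l_1 s$ then gives the two torsion orders directly, with the direct-sum splitting coming for free from the product structure rather than from any coprimality bookkeeping. If you want to salvage your approach, the honest route is to replace the $M_3$-Gysin computation by this product-base diagram.
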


We begin with some lemmas.
\begin{lemma}\label{exhomseq}
The 7-manifolds $M^7=Y^{p,q}\star_{l_1,l_2}S^3_\bfw$ satisfy the following conditions: 
\begin{enumerate}
\item $H_1(M^7,\bbz)=\pi_1(M^7)=0$,
\item $\pi_2(M^7)=\bbz^2$, 
\item $H^2(M^7,\bbz)=H_2(M^7,\bbz)=\bbz^2$,
\item $H^3(M^7,\bbz)$ is torsion free.
\item $b_3(M^7)=b_4(M^7)$ is even.
\end{enumerate}
\end{lemma}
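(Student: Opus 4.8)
The plan is to exploit the join structure $M^7 = Y^{p,q}\star_{l_1,l_2}S^3_\bfw$ together with the fact that $Y^{p,q}$ and $S^3_\bfw$ are themselves well-understood Sasakian manifolds. Recall that the join $M_1\star_{\ell_1,\ell_2}M_2$ fibers as an $S^1$-bundle over the product of the two quotient orbifolds, or alternatively that there is a fibration description coming from the defining $S^1$ action on $M_1\times M_2$: the total space of the join sits inside $M_1\times M_2$ as an $S^1$-quotient. Since $Y^{p,q}$ is a smooth compact simply connected 5-manifold with $H^2(Y^{p,q},\bbz)=\bbz$, $H^3(Y^{p,q},\bbz)=\bbz$ (it is rationally $S^2\times S^3$, and in fact diffeomorphic to it), and $S^3$ is a sphere, the product $Y^{p,q}\times S^3$ has readily computed cohomology. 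I would then feed this into the Gysin sequence of the $S^1$-orbibundle $S^1 \to Y^{p,q}\times S^3 \to B$ and then the Gysin sequence of $S^1\to M^7 \to B$, where $B$ is the base orbifold $(\calh_a,\grD_{\bfm_2})\times\bbc\bbp^1[\bfw]$ appearing in diagram \eqref{s2comdia}; alternatively, and more cleanly, use the long exact homotopy/homology sequence of the circle bundle $M^7 \to B$ directly.

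The concrete steps: \textbf{(1)} Simple connectivity and $\pi_2$. From diagram \eqref{s2comdia}, $M^7$ is the quotient of $Y^{p,q}\times S^3$ by a locally free $S^1$ action, hence $\pi_1(M^7)$ is a quotient of $\pi_1(Y^{p,q}\times S^3)=0$ by the image of $\pi_1(S^1)$; since both factors are simply connected this forces $\pi_1(M^7)=0$, giving (1). For $\pi_2$: the long exact sequence of the fibration $S^1\to Y^{p,q}\times S^3 \to M^7$ (a genuine fibration as the $S^1$-action is free on the smooth manifold $M^7$) reads $\pi_2(S^1)=0 \to \pi_2(Y^{p,q}\times S^3) \to \pi_2(M^7) \to \pi_1(S^1)=\bbz \to \pi_1(Y^{p,q}\times S^3)=0$, and $\pi_2(Y^{p,q}\times S^3)=\pi_2(Y^{p,q})=\bbz$ (since $Y^{p,q}$ is 2-connected through $\pi_2$... actually $H_2(Y^{p,q})=\bbz$, $\pi_2=\bbz$), so $\pi_2(M^7)$ is an extension of $\bbz$ by $\bbz$, hence $\bbz^2$ — this gives (2), and by Hurewicz plus simple connectivity $H_2(M^7,\bbz)=\bbz^2$, then universal coefficients gives $H^2(M^7,\bbz)=\bbz^2$, so (3). \textbf{(2)} For (4) and (5): apply the Gysin sequence of the Seifert $S^1$-bundle $M^7\to B$ over the base orbifold, or more elementarily use Poincaré duality on $M^7$ together with (3). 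Since $M^7$ is a closed oriented simply connected 7-manifold, Poincaré duality gives $H^5(M^7,\bbz)\cong H_2(M^7,\bbz)=\bbz^2$ and $H^4(M^7,\bbz)\cong H_3(M^7,\bbz)$, while $H^3(M^7,\bbz)$ is free iff $H_3$ is free iff (by UCT) $H^4$ has no $\bbz$-torsion contribution from... more precisely, the torsion subgroup of $H^k$ equals the torsion subgroup of $H_{k-1}$; so $\mathrm{Tors}\,H^3(M^7)=\mathrm{Tors}\,H_2(M^7)=0$ since $H_2=\bbz^2$ is free, giving (4). For (5): by Poincaré duality $b_3=b_4$, and by the fact that $\chi(M^7)=0$ (odd-dimensional) combined with $b_0=b_7=1$, $b_1=b_6=0$, $b_2=b_5=2$, we get $2(1-0+2-b_3)=0$... wait that gives $1+2 = b_3$, i.e. $b_3 = b_4 = 3$; in any case $b_3=b_4$, and I must argue evenness. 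Evenness should follow from the cup product structure: on a simply connected 7-manifold, the cup product $H^2\otimes H^3 \to H^5$ paired with the fundamental class, or the intersection form considerations on $H^3$... hmm. Actually the cleanest route to evenness of $b_3$: I would use the Gysin sequence of $M^7\to B$ to show $H^3(M^7,\bbq)$ receives a contribution from $H^3(B,\bbq)$, and $H^3$ of the 6-orbifold $B = (\text{Hirzebruch})\times\bbc\bbp^1$ vanishes, forcing $H^3(M^7,\bbq)$ to come entirely from $H^1(B)\cdot e \oplus (\ker: H^4(B)\to H^4(B))$ type terms which pair up; more honestly, $b_3(M^7)$ being even is forced because $H^3(M^7,\bbq)\cong H^4(M^7,\bbq)$ carries a nondegenerate skew pairing into $H^7$ only if... no. Let me instead note: for any closed oriented 7-manifold, $b_3 = b_4$ by Poincaré duality, so "$b_3 = b_4$" is automatic; the claim that this common value is even I would extract from the Gysin sequence, where $b_3(M^7) = b_3(B) + (\text{something}) = b_1(B) + \dim\mathrm{coker}(\cup e: H^2(B,\bbq)\to H^4(B,\bbq))$, and for $B$ a product of a Hirzebruch orbifold with $\bbc\bbp^1[\bfw]$ one has $b_{\mathrm{odd}}(B)=0$, so $b_3(M^7) = \dim\ker(\cup e\colon H^3 \to \cdots)$ — since $H^3(B)=0$, actually $b_3(M^7)$ equals the dimension of $\ker(\cup e\colon H^2(B,\bbq)\to H^4(B,\bbq))$ shifted appropriately, giving $b_3(M^7) = b_4(B) - \mathrm{rk}(\cup e) $, and $b_4(B)=b_2(B)=3$, $\mathrm{rk}(\cup e)$ is at most... this needs care but yields the parity.

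\textbf{Main obstacle.} The genuinely delicate point is item (5), the evenness of $b_3$ (equivalently the precise value, which the rational-cohomology claim pins to $b_3=b_4$ with the remaining Betti numbers $1,0,2,b_3,b_3,2,0,1$). Items (1)–(4) are routine once the fibration $S^1\to Y^{p,q}\times S^3\to M^7$ and the Gysin sequence over $B$ are set up. For (5) the cleanest argument is: run the Gysin sequence $\cdots \to H^{k-2}(B,\bbq)\xrightarrow{\cup e} H^k(B,\bbq)\to H^k(M^7,\bbq)\to H^{k-1}(B,\bbq)\to\cdots$ with $B=\calh_a\times\bbc\bbp^1$ (rationally), whose Betti numbers are $1,0,3,0,3,0,1$, and the Euler class $e$ is the join polarization; then $H^3(M^7,\bbq)=\mathrm{coker}(H^1(B)\xrightarrow{\cup e}H^3(B))\oplus\ker(H^2(B)\xrightarrow{\cup e}H^4(B))=0\oplus\ker(\cup e)$. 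One shows $\cup e\colon H^2(B,\bbq)\to H^4(B,\bbq)$ has rank $3-\dim\ker$, and by Poincaré duality on $B$ together with the fact that $e$ is a genuine Kähler-type class the kernel is $1$-dimensional (the "primitive" direction transverse to $[e]$ is what survives), so $b_3(M^7)=1$? That contradicts the Euler-characteristic count above, so I would need to recheck: with $b_3(M^7)=3$ we'd need $\ker(\cup e)$ three-dimensional, i.e. $\cup e = 0$ on $H^2(B,\bbq)$, which happens exactly when $e$ is primitive of a special type. I expect the honest computation, carried out as in Sections 3–4 of \cite{BoTo14a} with the Hirzebruch orbifold in place of $\bbc\bbp^1$, to show $b_3(M^7)=b_4(M^7)=3$, in particular even, and the torsion statement of Theorem \ref{maintopthm} will then follow by tracking the integral Gysin sequence and identifying the Euler class $e\in H^2(B,\bbz)$ with the orbifold-polarization class whose divisibility contributes the cyclic torsion factors $\bbz_{v^0v^\infty m^2 l_2^2}$ and $\bbz_{w_1w_2 l_1^2}$.
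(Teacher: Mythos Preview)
Your treatment of items (1)--(4) is essentially correct and close to the paper's, though the paper uses the cleaner fibration $\bbt^2\to S^3\times S^3\times S^3\to M^7$ from \eqref{t2fibration} rather than your $S^1\to Y^{p,q}\times S^3\to M^7$. With the $\bbt^2$ version, $\pi_2((S^3)^3)=0$ gives $\pi_2(M^7)\cong\pi_1(\bbt^2)=\bbz^2$ immediately, avoiding your extension argument (which is fine, since $\bbz$ is free, but you gloss over the splitting). Your derivation of (4) via $\mathrm{Tors}\,H^3=\mathrm{Tors}\,H_2=0$ is exactly what the paper does.

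The genuine gap is item (5). You correctly identify this as the ``main obstacle'' but never resolve it, and your attempted Gysin-sequence computation goes off the rails: the Euler-characteristic argument gives no information (for any closed odd-dimensional manifold $\chi=0$ is automatic from Poincar\'e duality, so you cannot extract $b_3=3$ from it), and your later Gysin analysis oscillates between $b_3=1$ and $b_3=3$ without settling. In fact both are wrong: Lemma~\ref{H30} shows $b_3=0$. The paper's argument for evenness is a one-liner you have missed entirely: $M^7$ carries a Sasakian structure, and it is a standard fact (see e.g.\ \cite{BG05}) that for a compact Sasakian manifold of dimension $2n+1$ the odd Betti numbers $b_1,b_3,\ldots,b_{2\lfloor n/2\rfloor+1}$ up to the middle are even, by the transverse hard Lefschetz/basic Hodge theory. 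Here $\dim M^7=7$ so $n=3$ and $b_3$ is even; $b_3=b_4$ is just Poincar\'e duality. No Gysin computation is needed for this lemma.
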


\begin{proof}
From the long exact homotopy sequence for the fibration 
\begin{equation}\label{t2fibration}
\bbt^2\ra{1.8} S^3\times S^3\times S^3\ra{1.8} M^7
\end{equation}
we conclude that $M^7$ is simply connected and that $\pi_2(M^7)=\bbz^2$. Thus, by Hurewicz $H_2(M^7,\bbz)=\bbz^2$ which implies (4) by universal coefficients, and then by item (1) $H^2(M^7,\bbz)= \bbz^2$. Item (5) follows from Poincar\'e duality and the fact that $M^7$ admits a Sasakian structure.

\end{proof}

Actually we have

\begin{lemma}\label{H30}
$H^3(M^7,\bbz)=0$.
\end{lemma}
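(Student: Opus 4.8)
The plan is to run the Gysin sequence for the $S^1$-orbibundle presentation of $M^7$ over the Bott orbifold $(M_3(a,b,c),\grD_\bfm)$ supplied by Theorem 2.4, combined with the cohomological information on stage $3$ Bott manifolds from \cite{BoCaTo17,BoCaTo17}. Concretely, $M^7$ is the total space of an $S^1$-orbibundle $\pi\colon M^7\to (M_3(a,b,c),\grD_\bfm)$ whose Euler class is the ample orbifold class $c_1^{orb}(M_3(a,b,c),\grD_\bfm)$ (up to the covering torsion, see Remark 2.5). Passing to rational coefficients first, the Gysin sequence reads
\[
\cdots\to H^1(M_3,\bbq)\xrightarrow{\;\cup\,e\;} H^3(M_3,\bbq)\to H^3(M^7,\bbq)\to H^2(M_3,\bbq)\xrightarrow{\;\cup\,e\;} H^4(M_3,\bbq)\to\cdots
\]
Since $M_3=M_3(a,b,c)$ is a stage $3$ Bott manifold, it is a smooth projective toric variety with $H^{odd}(M_3,\bbz)=0$ and $H^2(M_3,\bbz)=\bbz^3$, $H^4(M_3,\bbz)=\bbz^3$. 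Hence $H^1(M_3,\bbq)=H^3(M_3,\bbq)=0$, so the sequence forces $H^3(M^7,\bbq)\cong\ker\bigl(H^2(M_3,\bbq)\xrightarrow{\cup e} H^4(M_3,\bbq)\bigr)$. The first step is therefore to show this cup-product map is injective; because $e=c_1^{orb}$ is an ample (K\"ahler) class by the log Fano/positivity discussion (Lemma 1.8 and Remark 2.5), the Hard Lefschetz theorem for the K\"ahler orbifold $(M_3,\grD_\bfm)$ — equivalently, multiplication by an ample class on a smooth projective threefold $H^2\to H^4$ — is an isomorphism over $\bbq$. This yields $H^3(M^7,\bbq)=0$, which already re-proves items (1)–(5) of Lemma 1.7 are consistent and refines $b_3(M^7)=0$.

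To upgrade to integral coefficients I would argue that $H^3(M^7,\bbz)$ is torsion-free (this is exactly item (4) of Lemma 1.7, obtained there from universal coefficients and $H_2=\bbz^2$ free) and then a finitely generated torsion-free group of rank $b_3=0$ is zero. Thus the only genuinely new content beyond Lemma 1.7 is the vanishing of the Betti number $b_3$, i.e. the rational statement above. One must be slightly careful that the Gysin sequence for an orbibundle with non-smooth total space quotient is the one for the classifying-space $S^1$-bundle and then compared with the Gysin sequence of $M^7$ over the underlying space; but $M^7$ is a genuine smooth manifold (we assumed $l_1,l_2,w_1,w_2$ chosen for smoothness), and the orbibundle Euler class maps to the ordinary Euler class in $H^2(M_3,\bbq)$, so rationally there is no discrepancy and the argument above is valid verbatim.

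The main obstacle is verifying that cup product with $c_1^{orb}$ is injective $H^2\to H^4$ over $\bbq$ for \emph{every} admissible parameter choice $(a,b,c)$ and every admissible ramification vector $\bfm$ — a priori Hard Lefschetz gives this only for a \emph{K\"ahler} class, so one needs that $c_1^{orb}(M_3(a,b,c),\grD_\bfm)$ genuinely lies in the K\"ahler cone $\calk(M_3(a,b,c))$. But this is precisely the log Fano condition, which holds here since the join $Y^{p,q}\star_{l_1,l_2}S^3_\bfw$ with the Einstein Reeb field quotients onto a log Fano Bott orbifold (the Sasaki–Einstein condition forces $c_1^{orb}$ positive), so Lemma 1.8 applies and the inequalities there are satisfied. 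Alternatively, one avoids Hard Lefschetz altogether: injectivity of $\cup e\colon H^2(M_3,\bbq)\to H^4(M_3,\bbq)$ on a compact K\"ahler (or smooth projective) threefold is immediate from $e^2\cup(\,\cdot\,)$ being nondegenerate, since for $0\neq\alpha\in H^2$ one has $\int_{M_3}\alpha\cup e\cup e>0$ when $\alpha$ is itself a K\"ahler class and, using the toric description and the basis $\{x_1,x_2,x_3\}$, the bilinear form $(\alpha,\beta)\mapsto\int_{M_3}\alpha\cup\beta\cup e$ on $H^2(M_3,\bbq)$ can be checked to be nondegenerate directly from the intersection numbers of the fan \eqref{primcol}. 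Either route closes the proof.
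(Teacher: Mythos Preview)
Your argument is correct in its essentials and is more conceptual than the paper's own proof. Both approaches begin identically: use that $H^3(M^7,\bbz)$ is torsion-free (item (4) of the preceding lemma), pass to $\bbq$ coefficients, and run the Leray--Serre/Gysin sequence for the $S^1$-orbibundle $M^7\to(M_3(a,b,c),\grD_\bfm)$, reducing the question to injectivity of cup product with the Euler class $e\colon H^2(M_3,\bbq)\to H^4(M_3,\bbq)$. The paper then writes this map as an explicit $3\times 3$ matrix in the monomial basis $\{x_ix_j\}$ (using the ring presentation of \cite{ChMasu10}), observes that with $c_1,c_2,c_3>0$ its rank is $2$ or $3$, and rules out rank $2$ by a parity trick: a one-dimensional kernel would force $b_3(M^7)=1$, contradicting item (5) of the preceding lemma. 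You instead invoke Hard Lefschetz on the smooth projective threefold $M_3(a,b,c)$ to conclude directly that $\cup\,e$ is an isomorphism. Your route avoids both the ring computation and the parity argument; the paper's route is more self-contained and in keeping with the explicit, computational flavor of the surrounding sections.

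One correction is needed, however: the Euler class of the orbibundle is \emph{not} $c_1^{orb}(M_3,\grD_\bfm)$ in general --- it is the transverse K\"ahler class of whichever quasi-regular Reeb field you choose for the quotient, and Theorem~\ref{maintopthm} (in whose proof this lemma sits) does not assume the Sasaki--Einstein or log Fano situation. Your appeal to the log Fano inequalities of Lemma~\ref{orbFano} is therefore both unnecessary and, as written, restricts the generality. The fix is immediate: for \emph{any} quasi-regular Sasakian quotient the Euler class is (a positive multiple of) a K\"ahler class on $M_3(a,b,c)$, so Hard Lefschetz applies without further hypothesis. With that adjustment your proof covers the full statement. (Your ``alternative'' via nondegeneracy of $(\alpha,\beta)\mapsto\int_{M_3}\alpha\cup\beta\cup e$ is not really a separate argument: that nondegeneracy is itself a reformulation of Hard Lefschetz in middle degree.)
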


\begin{proof}
First by (4) of Lemma \ref{exhomseq} $H^3(M^7,\bbz)$ is torsion free, so it suffices to work with $\bbq$ coefficients. Since $M^7$ is simply connected  and is an $S^1$ orbibundle over a stage 3 Bott orbifold $(M_3(a,b,c),\grD_\bfm)$ we can apply the Leray-Serre Theorem with $\bbq$ coefficients. The differential $d_2:E_2^{0,1}\ra{1.8} E_2^{2,0}=H^2(M^7,\bbq)$ sends the class $\gra$ of the fiber $S^1$ to the K\"ahler class $c_1x_1+c_2x_2+c_3x_3$ where $c_i\in\bbq^+$. So by naturality we have $d_2(\gra\otimes x_i)=(c_1x_1+c_2x_2+c_3x_3)x_i$. Suppose there would exist a class $w=w_1x_1+w_2x_2+w_3x_3\in E_2^{2,0}$ such that $d_2(\gra\otimes w)=0$. Then the 3-class $\gra\otimes w$ would survive to the limit giving a nonzero element in $H^3(M^7,\bbq)$ by the Leray-Serre Theorem. Now the cohomology ring of $M_3(a,b,c)$ is \cite{ChMasu10}
$$\bbz[x_1,x_2,x_3]/\bigl(x_1^2,x_2(ax_1 + x_2), x_3(bx_1 + cx_2 + x_3)\bigr).$$
So computing the $d_2$ differential we have
\begin{eqnarray*}
0&=&d_2(\gra\otimes w)=d_2(\gra)\otimes w=(c_1x_1+c_2x_2+c_3x_3)(w_1x_1+w_2x_2+w_3x_3)\\
  &=&(c_1w_2+c_2w_1-c_2w_2a)x_1x_2+(c_1w_3+c_3w_1-c_3w_3b)x_1x_3 +(c_2w_3+c_3w_2-c_3w_3c)x_2x_3
\end{eqnarray*}
which gives
\begin{equation}\label{H3soln}
\begin{pmatrix}
c_2 & c_1-c_2a & 0 \\
c_3 & 0 & c_1-c_3b \\
0 & c_3 & c_2-c_3c
\end{pmatrix}
\begin{pmatrix}
w_1 \\
w_2 \\
w_3
\end{pmatrix}=0.
\end{equation}
Since the coefficients $c_1,c_2,c_3$ are all positive, the rank of the matrix 
$$C=\begin{pmatrix}
c_2 & c_1-c_2a & 0 \\
c_3 & 0 & c_1-c_3b \\
0 & c_3 & c_2-c_3c
\end{pmatrix}$$
is either 3 or 2 which can be seen by putting $C$ in Jordan canonical form. If the rank of $C$ were 2 there would be exactly one solution to Equation \eqref{H3soln} which is an element of $E_2^{1,2}$ and since $E_2^{3,0}=H^3(M_3(a,b,c),\bbq)=0$, there would be precisely one generator in $H^3(M^7,\bbq)$ which contradicts the fact that $b_3(M^7)$ is even.
\end{proof}

\noindent{Proof of Theorem \ref{maintopthm}:} 
It follows from Lemmas \ref{exhomseq} and \ref{H30} and the fact that the homology (cohomology) of a connected sum is the direct sum of the homology (cohomology) of the two factors that $M^7$ has the rational homology (cohomology) of the connected sum $(S^2\times S^5)\# (S^2\times S^5)$. By Poincar\'e duality and Lemma \ref{exhomseq} it follows that $H^5(M^7,\bbz)$ and $H^6(M^7,\bbz)$ have no torsion. 
So it suffices to compute the torsion in $H^4(M^7,\bbz)$. First
\begin{lemma}\label{orbcoh}
We have
$$H^{r}_{orb}((\calh_a,\grD_\bfm),\bbz)=\begin{cases} \bbz &~~\text{if $r=0$} \\
                                     \bbz^2&  ~~\text{if $r=2$} \\
                                  \bbz\oplus \bbz_{m^0}\oplus \bbz_{m^\infty} & ~~\text{if $r=4$} \\
                                  \bbz_{m^0}\oplus \bbz_{m^\infty} & ~~\text{if $r=6,8,\cdots$} \\
                                  0 & ~~\text{if $r$ is odd.}
                                  \end{cases}$$
\end{lemma}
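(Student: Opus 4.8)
The plan is to use that $\calz:=(\calh_a,\grD_\bfm)$ is the quotient of the simply connected $5$-manifold $Y^{p,q}$ (diffeomorphic to $S^{2}\times S^{3}$) by the locally free circle action generated by the quasi-regular Reeb field $\xi_\bfm$ — exactly the Seifert structure already set up in the proof of Theorem \ref{38thm} and around \eqref{primKah}. Consequently $B\calz$ is the homotopy quotient $Y^{p,q}\times_{S^{1}}ES^{1}$, and the Borel fibration $Y^{p,q}\hookrightarrow B\calz\to BS^{1}=\bbc\bbp^{\infty}$ yields the orbifold Gysin sequence (integer coefficients, cf. \cite{BG05})
\[
\cdots\longrightarrow H^{r-2}_{orb}(\calz)\xrightarrow{\ \cup e\ }H^{r}_{orb}(\calz)\xrightarrow{\ \iota^{*}\ }H^{r}(Y^{p,q})\xrightarrow{\ \partial\ }H^{r-1}_{orb}(\calz)\xrightarrow{\ \cup e\ }H^{r+1}_{orb}(\calz)\longrightarrow\cdots,
\]
where $e\in H^{2}_{orb}(\calz;\bbz)$ is the (non-torsion) Euler class of the Seifert bundle.

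First I would feed in $H^{*}(Y^{p,q})\cong H^{*}(S^{2}\times S^{3};\bbz)$, which is $\bbz$ in degrees $0,2,3,5$ and $0$ otherwise, together with $H^{*}_{orb}(\calz;\bbq)\cong H^{*}(\calh_a;\bbq)$ on the coarse space (so all cohomology in odd degrees, in degree $3$, and in degrees $\ge 5$ is torsion). Degrees $0,1$ are then immediate; in degree $2$ the sequence gives the split extension $0\to\bbz\langle e\rangle\to H^{2}_{orb}(\calz)\to H^{2}(Y^{p,q})\to 0$, so $H^{2}_{orb}(\calz)=\bbz^{2}$; $H^{3}_{orb}(\calz)$ injects into $H^{3}(Y^{p,q})=\bbz$ and is torsion, hence vanishes, and the same argument (plus $\cup e$-periodicity once $H^{*}(Y^{p,q})$ dies) kills all odd degrees. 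Since $H^{4}(Y^{p,q})=0$, the map $\cup e\colon H^{2}_{orb}(\calz)\to H^{4}_{orb}(\calz)$ is onto with kernel $\partial(H^{3}(Y^{p,q}))$, so $H^{4}_{orb}(\calz)=\operatorname{coker}(\partial\colon\bbz\to\bbz^{2})$; for even $r\ge 6$ one has $H^{r}_{orb}(\calz)\cong H^{r+2}_{orb}(\calz)$ (as $H^{r}(Y^{p,q})=H^{r+1}(Y^{p,q})=0$ there), while $H^{6}_{orb}(\calz)=H^{4}_{orb}(\calz)/\partial(H^{5}(Y^{p,q}))$.

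What remains, and what I expect to be the crux, is to compute the connecting maps $\partial$ out of $H^{3}$ and $H^{5}$ of $Y^{p,q}$ explicitly, i.e.\ to identify the integral cohomology ring of $\calz$ near degree $4$ and the location of the generator $e$. I would do this toric-geometrically: $H^{*}_{orb}(\calz;\bbz)$ is the Stanley--Reisner ring of the ``square'' fan of Section \ref{divsec} with the linear relations twisted by the ramification indices (the relation among $[D_{v_2}],[D_{u_2}],[D_{v_1}]$ acquiring coefficients $m^{0},m^{\infty}$), and $e$ is the rational combination of $x_{1},x_{2}$ read off from \eqref{primKah}; the cokernel computation then reduces to a Smith-normal-form calculation involving $m^{0},m^{\infty}$ and the integer $a$ of \eqref{step1data}, producing the free $\bbz$ (the coarse class in $H^{4}(\calh_a)$) together with the torsion $\bbz_{m^{0}}\oplus\bbz_{m^{\infty}}$ which then persists in all higher even degrees via the $\cup e$ isomorphisms. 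Equivalently, and perhaps more transparently, one can split $\calh_a$ into orbifold tubular neighbourhoods of the two \emph{disjoint} branch divisors $D_{v_2},D_{u_2}$ — whose classifying spaces are homotopy equivalent to $\bbc\bbp^{1}\times B\bbz_{m^{0}}$ and $\bbc\bbp^{1}\times B\bbz_{m^{\infty}}$ (the normal rotation acting trivially on the sections) — glued along the smooth circle bundle $\calh_a\setminus(D_{v_2}\cup D_{u_2})$ over $S^{2}$, and run Mayer--Vietoris; the summands $\bbz_{m^{0}}$ and $\bbz_{m^{\infty}}$ in degrees $\ge 4$ then come directly from the $H^{*}(B\bbz_{m^{0}})$ and $H^{*}(B\bbz_{m^{\infty}})$ factors. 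The low-degree part of the statement ($r\le 3$) is by contrast essentially formal.
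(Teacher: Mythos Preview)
Your approach is genuinely different from the paper's. The paper never touches $Y^{p,q}$ here; it runs the Leray spectral sequence of the projection $p\colon \mathsf{B}(\calh_a,\grD_\bfm)\to\calh_a$ onto the coarse space. The higher direct images $R^{2s}p_*\bbz$ are supported on the branch locus $e_0\cup e_\infty$ with stalks $\bbz_{m^0},\bbz_{m^\infty}$, so the $E_2$-page is $H^r(\calh_a,\bbz)$ on the bottom row together with copies of $\bbz_{m^0}\oplus\bbz_{m^\infty}$ coming from $H^*(e_0\sqcup e_\infty)$; the sequence collapses for degree reasons and one reads off the answer. Your Gysin argument for the Seifert $S^1$-bundle $Y^{p,q}\to \mathsf{B}\calz$ is perfectly valid for the ``formal'' part (degrees $\le 3$, odd vanishing, $\cup e$-periodicity for $r\ge 6$), and it has the virtue of making the eventual periodicity transparent without any spectral-sequence bookkeeping.

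The gap is exactly where you say the crux lies: neither of your two proposed computations of the connecting maps is actually carried out, and the Mayer--Vietoris variant is more delicate than you indicate. In particular, the identification $BU_0\simeq\bbc\bbp^1\times B\bbz_{m^0}$ presupposes that the orbifold neighbourhood of $D_{v_2}$ is a \emph{global} $\bbz_{m^0}$-quotient of a line bundle over $\bbc\bbp^1$. Writing the transition function of $\calh_a\setminus e_\infty\cong\calo(a)$ as $(z,u)\mapsto(1/z,z^a u)$ and passing to orbifold charts $u=w^{m^0}$, one needs $z^{a/m^0}$ to make sense, i.e.\ $m^0\mid a$; when $m^0\nmid a$ the neighbourhood is only locally a quotient and the $B\bbz_{m^0}$-bundle over $\bbc\bbp^1$ need not be a product. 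So the ``direct'' splitting of the torsion into $\bbz_{m^0}$ and $\bbz_{m^\infty}$ summands from the two branches requires more work than your parenthetical suggests. The paper's Leray approach sidesteps this by computing $H^*(\calh_a,R^{2s}p_*\bbz)$, which only needs the \emph{stalks} of the direct-image sheaf along $e_0\cup e_\infty$, not a global quotient presentation; that is what makes it cleaner here. Your Stanley--Reisner alternative would also work and is closer in spirit to the paper's method, but you would still need to execute the Smith-normal-form step rather than assert its outcome.
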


\begin{proof}
The Leray sheaf of the map 
$$p:\mathsf{B}(\calh_a,\grD_\bfm)\ra{2.5} ~\calh_a$$ 
is the derived functor sheaf $R^sp\bbz$,  that is, the sheaf associated to the presheaf $U\mapsto H^s(p^{-1}(U),\bbz)$. For $s>0$ the stalks of $R^sp\bbz$ at points of $U$ vanish if $U$ lies in the regular locas of $(\calh_a,\grD_\bfm)$ which is the complement of the union of the zero $e_0$ and infinity $e_\infty$ sections of the natural projection $\calh_a\ra{1.6} \bbc\bbp^1$. However, at points of $e_0$ and  $e_\infty$ the fibers of $p$ are (up to homotopy) the Eilenberg-MacLane spaces $K(\bbz_{m^0},1)$ and $K(\bbz_{m^\infty},1)$, respectively. So at points of $e_0(e_\infty)$ the stalks are the group cohomology $H^s(\bbz_{m^0},\bbz)\bigl(H^s(\bbz_{m^\infty},\bbz)\bigr)$. This is $\bbz$ for $s=0$ and $\bbz_{m^0}(\bbz_{m^\infty})$ at points of $e_0(e_\infty)$ when $s>0$ is even; it vanishes when $s$ is odd. The $E_2$ term of the Leray spectral sequence of the map $p$ is 
$$E_2^{r,s}=H^r(\calh_a,R^sp\bbz)$$
and by Leray's theorem this converges to the orbifold cohomology  $H^{r+s}_{orb}((\calh_a,\grD_\bfm),\bbz)$. Now $E_2^{r,0}=H^r(\calh_a,\bbz)$ and $E_2^{r,s}=0$ for $r$ or $s$ odd. For $r=0$ since $R^sp\bbz$ has its support in the orbifold singular locus $e_0\cup e_\infty$, the only continuous section of $R^sp\bbz$ is the 0 section which implies that $E_2^{0,s}=0$ for all $s$. Now we have $E_2^{2r,2s}=0$ for $r>1$ and  
$$E_2^{2,2s}=H^2(\calh_a,R^{2s}p)=H^2(e_0,\bbz_{m^0})\oplus  H^2(e_\infty,\bbz_{m^\infty})=\bbz_{m^0}\oplus \bbz_{m^\infty}.$$ 
One easily sees this spectral sequence collapses whose limit is the orbifold cohomology $H^{r}_{orb}((\calh_a,\grD_\bfm),\bbz)$ which implies the result. 
\end{proof}

To continue the proof of Theorem \ref{maintopthm}, as in \cite{BoTo14a,BoTo14NY}, we use the commutative diagram of fibrations
\begin{equation}\label{orbifibrationexactseq}
\begin{matrix}
Y^{p,q}\times S^3_\bfw &\ra{2.6} & M_{l_1,l_2,\bfw}& \ra{2.6}& \mathsf{B}S^1 \\
\decdnar{=}&&\decdnar{}&&\decdnar{\psi}\\
Y^{p,q}\times S^3_\bfw&\ra{2.6} &\mathsf{B}(\calh_a,\grD_\bfm) \times\mathsf{B}\bbc\bbp^1[\bfw]&\ra{2.6}
&\mathsf{B}S^1\times \mathsf{B}S^1.\, 
\end{matrix} 
\end{equation}
Here $\mathsf{B}G$ is the classifying space of a group $G$ or Haefliger's classifying space \cite{Hae84} of an orbifold if $G$ is an orbifold.
The lower exact fibration is a product of fibrations. We denote the orientation classes of $Y^{p,q}\times S^3=S^2\times S^3\times S^3$ by $\gra,\grb,\grg$, respectively. As in 3.2.2 of \cite{BoTo14NY} we have $d_4(\grg)=w_1w_2s_2^2$. For the fibration 
$$Y^{p,q}\ra{2.6}\mathsf{B}(\calh,\grD_\bfm)\ra{2.6}\mathsf{B}S^1$$ 
we have $d_4(\grb)=m^2v^0v^\infty s_1^2$. The map $\psi$ in Diagram \eqref{orbifibrationexactseq} is induced by the map $e^{i\theta}\mapsto (e^{il_2\theta}, e^{-il_1\theta})$, so $\psi^*s_1=l_2s$ and $\psi^*s_2=-l_1s$ the result follows by the commutativity of Diagram \eqref{orbifibrationexactseq}. \hfill $\Box$

\begin{remark}
Although the case $(p,q)=(1,0)$ does not fit directly into the $Y^{p,q}$ scheme of \cite{GMSW04a}, it can, nevertheless, be identified with the homogeneous Sasaki-Einstein structure on $S^2\times S^3$. Then if we take $(v^0,v^\infty)=(w_1,w_2)=(l_1,l_2)=(1,1)$ and $m=1$ we obtain the homogeneous Sasaki-Einstein structure on an $S^1$ bundle over $\bbc\bbp^1\times\bbc\bbp^1\times\bbc\bbp^1$ which is a 7-manifold with the integral cohomology of the 2-fold connected sum $2(S^2\times S^5)$. See Sections 11.1.1 and 11.4.2 of \cite{BG05} for details. The general $S^3_\bfw$ join with $Y^{1,0}$ was treated in Section 3.2.2 of \cite{BoTo14NY}.
\end{remark}

\section{A Generalized Orbifold Calabi construction}
We now discuss a family of explicit examples or orbifold K\"ahler-Einstein metrics that we may view as arising from a special case of the  generalized Calabi construction as presented in \cite[Section
  2.5]{ACGT04} and further discussed in \cite[Section 2.3]{ACGT11} and \cite[Section 5.1]{BoCaTo17} - here generalized to allowing certain mild orbifold singularities. 

The base of the construction will in this case be a K\"ahler-Einstein Hirzebruch orbifold $(\calh_a,\grD_{\bfm_2})$. This is the K\"ahler quotient of the quasi-regular Sasaki-Einstein $Y^{p,q}$ examples produced in \cite{GMSW04a}). Now $Y^{p,q}$ may be viewed as the total space of a $S^1$ principal orbi-bundle, $P$, over $(\calh_a,\grD_{\bfm_2})$ defined by the class
$\frac{c_1^{orb}(\calh_a,\grD_{\bfm_2})}{ \cali_{\bfv_2}}=\frac{(2m_2v^0_2+a)v^\infty_2 x_1+(v^0_2+v^\infty_2)x_2}{m_2v^0_2v^\infty_2 \cali_{\bfv_2}}$, where the notation is as in Lemma \ref{Ypqind}. In particular, the index of $(\calh_a,\grD_{\bfm_2})$ is $\cali_{\bfv_2}=\gcd(2m_2v^0_2+a)v^\infty_2,v^0_2+v^\infty_2)$, where $(m_2^0,m_2^\infty)=m_2(v_2^0,v_2^\infty)$ and $v_2^0,v_2^\infty$ are coprime.
Let $g_{base}$ denote the K\"ahler-Einstein orbifold metric whose K\"ahler form, $\omega_{base}$, satisfies that
$$\left[ \frac{\omega_{base}}{2\pi}\right] = \frac{(2m_2v^0_2+a)v^\infty_2 x_1+(v^0_2+v^\infty_2)x_2}{m_2v^0_2v^\infty_2\cali_{\bfv_2}}.$$
As we saw in Example 6.8 of \cite{BoTo14a}, the metric $g_{base}$ is explicit and "admissible" in the sense of \cite{ACGT08}. Note that the Ricci form of $g_{base}$ is given by $\rho_{base} = \cali_{\bfv_2} \omega_{base}$. 

We consider the generalized Calabi construction of orbifold K\"ahler metrics on the bunde $P \times_{S^1} \bbc\bbp^1_{m_3^0,m_3^\infty} \rightarrow (\calh_a,\grD_{\bfm_2})$. This may also be viewed as an admissible construction - extended to mild orbifold cases.

\begin{definition}\label{calabidata}
{\em Generalized orbifold Calabi data for our purposes}.
\begin{enumerate}
\item A log pair $(\calh_a,\grD_{\bfm_2})$ with K\"ahler-Einstein structure
  $(\omega_{base},g_{base})$ such that \newline
  $\left[ \frac{\omega_{base}}{2\pi}\right] = \frac{(2m_2v^0_2+a)v^\infty_2 x_1+(v^0_2+v^\infty_2)x_2}{m_2v^0_2v^\infty_2\cali_{\bfv_2}}.$
\item The weighted projective line $(\bbc \bbp ^1_{m_3^0,m_3^\infty} = \bbc \bbp ^1_{v_3^0,v_3^\infty}/\bbz_{m_3},g_{\bfm_3},\omega_{\bfm_3})$ with rational Delzant polytope $[-1,1] 
\subseteq  \bbr^*$ and momentum map $\gz\colon \bbc \bbp ^1_{m_3^0,m_3^\infty} \rightarrow [-1,1]$. Here $(m_3^0,m_3^\infty)=m_3(v_3^0,v_3^\infty)$ and $v_3^0,v_3^\infty$ are coprime.
\item A principal $S^1$ orbi-bundle, $P_n \rightarrow (\calh_a,\grD_{\bfm_2})$, with a principal
  connection of curvature $n\,\omega_{base}\in \Omega^{1,1}((\calh_a,\grD_{\bfm_2}),
\bbr)$, where $S^1$ acts  on $\bbc \bbp ^1_{m_3^0,m_3^\infty}$, $n\in {\bbz} \setminus \{0\}$, and $\gcd(n,m_3)=1$. Note that $n \in \,span_\bbz \{ v_3^0,v_3^\infty\}$ (since $v_3^0,v_3^\infty$ are coprime), so $m_3n \in \,span_\bbz \{ m_3^0,m_3^\infty\}$. 
\item A constant $0< |r_3| <1$ of same sign as $n$ \newline
{\rm [ensuring that the $(1,1)$-form $(1/r_3 + \gz)n\,\omega_{base}$ is positive for $\gz \in [-1,1]$]}.
\end{enumerate}
\end{definition}
From this data we may define the orbifold
\[
M_3=P_n\times_{S^1}\bbc \bbp ^1_{m_3^0,m_3^\infty} = \mathring{M_3}\times_{\bbc^*}\bbc \bbp ^1_{m_3^0,m_3^\infty}  \rightarrow  (\calh_a,\grD_{\bfm_2}),
\]
where $ \mathring{M_3} = P_n \times_{S^1}\left( \gz^{-1}(-1,1)\right)$. Since the curvature $2$-form
of $P_n$ has type $(1,1)$, $\mathring{M_3}$ is a holomorphic principal
$\bbc^*$ bundle with connection $\theta \in
\Omega^1(\mathring{M_3},\bbr)$ and $M_3$ is a complex orbifold.

On $\mathring{M_3}$ we define K\"ahler structures of the form
\begin{equation}\label{compatiblemetric}
\begin{aligned}
g &=(1/r_3 + \gz)n\,g_{base} + \frac{1}{\Theta(\gz)}d\gz^2 + \Theta(\gz) \theta^2\\
\omega &=  (1/r_3 + \gz)n\,\omega_{base} + d\gz \wedge \theta\\
d\theta &=n\,\omega_{base},
\end{aligned}
\end{equation}
where $\frac{1}{\Theta(\gz)} = \frac{d^2 U}{d \gz^2}$ and $U$ is the symplectic
potential \cite{Gui94b} of the chosen toric K\"ahler structure $g_{\bfm_3}$ on $\bbc \bbp ^1_{m_3^0,m_3^\infty}$.

The {\em generalized Calabi construction} arises from seeing
\eqref{compatiblemetric} as a blueprint for the construction of various orbifold
K\"ahler metrics on $M_3$ by choosing various smooth functions $\Theta(\gz)$ on $(-1,1)$ satisfying that
\begin{itemize}
\item \textup{[boundary values]} 
the following endpoint conditions are satisfied
\begin{equation}\label{eq:toricboundary}
\Theta(\pm 1) =0\qquad {\rm and}\qquad \Theta\,'(-1) =2/m_3^\infty\qquad {\rm and}\qquad \Theta\,'(1)=-2/m_3^0;
\end{equation}
\item \textup{[positivity]} 
the function $\Theta(\gz)$ is positive for $\gz\in(-1,1)$.
\end{itemize}
Then \eqref{compatiblemetric} extends to an orbifold K\"ahler metric on $(M(a,b,c),\Delta_\bfm)$, where
$b = n \frac{(2m_2v^0_2+a)v^\infty_2}{\cali_{\bfv_2}}$, $c=n\frac{v^0_2+v^\infty_2}{\cali_{\bfv_2}}$, and
$\bfm = (1,1,m_2^0,m_2^\infty,m_3^0,m_3^\infty)$.
Metrics
constructed this way are called {\em compatible K\"ahler metrics} with {\em compatible K\"ahler classes} parametrized by
$r_3$.
From \cite{ApCaGa06} (and specifically to our notation from \cite[Proposition 5.4]{BoTo14a}) we have that
the compatible metric defined by $\Theta(\gz)$ is K\"ahler-Einstein exactly when
\begin{equation}\label{KE1}
2r_3\cali_{\bfv_2}/n = (1+r_3)/m_3^\infty + (1-r_3)/m_3^0
\end{equation}
and
\begin{equation}\label{KE2}
\int_{-1}^1 ((1-\gz)/m_3^\infty - (1+\gz)/m_3^0)(1+r_3 \gz)^2\,d\gz=0.
\end{equation}
\section{Explicit Sasaki-Einstein metrics}

We now look more closely for explicit Sasaki-Einstein examples arising from the join from Theorem \ref{38thm}. 
The arguments in Sections 6.1 and 6.2 of \cite{BoTo14a} (see specifically page 1053) carry through so that we have an adapted version of Theorem 1.4 of \cite{BoTo14a}:

\begin{theorem}\label{SEexistence}
Consider the join $M^7=Y^{p,q}\star_{l_1,l_2}S^3_\bfw$ where $Y^{p,q}$ has a quasi-regular Sasaki-Einstein structure
with quotient Hirzebruch orbifold $(\calh_a,\grD_{\bfv_2})$ with $a>0$
and $l_1,l_2$ are given by
$$l_1=\frac{\cali_{\bfv_2}}{\gcd(|\bfw|,\cali_{\bfv_2})},\qquad l_2=\frac{|\bfw|}{\gcd(|\bfw|,\cali_{\bfv_2})}.$$
Then for each vector $\bfw = (w_1,w_2) \in \bbz^+\times\bbz^+$ with relatively prime components satisfying $w_1>w_2$ there exist a Reeb vector field $\xi_{\bfv_3}$ in the $2$-dimensional $\bfw$-Sasaki cone on $M^7$ such that the corresponding Sasakian structure is Sasaki-Einstein.
\end{theorem}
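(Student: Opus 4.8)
The plan is to translate the Sasaki-Einstein condition on $M^7$ into the solvability of the Calabi-Einstein system \eqref{KE1}-\eqref{KE2} of Section 4, and then to exhibit a solution coming from a ray in the $\bfw$-Sasaki cone. A Sasakian structure on $M^7$ with Reeb field $\xi_{\bfv_3}$ in the $\bfw$-subcone is Sasaki-Einstein exactly when its transverse K\"ahler structure is K\"ahler-Einstein with positive Einstein constant; since the basic first Chern class is then a positive multiple of the transverse K\"ahler class, Theorem \ref{38thm} (in the quasi-regular case, and by continuity together with the real-cohomology description of $c_1^{orb}$ in general) identifies this structure with an orbifold K\"ahler-Einstein metric on the log pair $(M_3(a,b,c),\grD_\bfm)$ polarized by $c_1^{orb}(M_3(a,b,c),\grD_\bfm)$, where $b=n\hat{b}$, $c=n\hat{c}$, $\bfm=(1,1,m_2^0,m_2^\infty,m_3^0,m_3^\infty)$, and $a$ is fixed by $(p,q)$ through \eqref{step1data}. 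By Section 4 this transverse structure is precisely the output of the generalized orbifold Calabi construction over the K\"ahler-Einstein base $(\calh_a,\grD_{\bfm_2})$ with fiber $\bbc\bbp^1_{m_3^0,m_3^\infty}$ and bundle $P_n$, so the problem reduces to: for the $l_1,l_2$ prescribed in the theorem, find a ray $\bfv_3=(v_3^0,v_3^\infty)$ in the interior of the $2$-dimensional $\bfw$-cone and a constant $r_3$ of the correct sign with $0<|r_3|<1$, together with a function $\Theta$ positive on $(-1,1)$ satisfying \eqref{eq:toricboundary}, \eqref{KE1} and \eqref{KE2}.

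Next I would reduce \eqref{KE1}-\eqref{KE2} to a single scalar equation. Substituting $(m_3^0,m_3^\infty)=m_3(v_3^0,v_3^\infty)$, $n=\tfrac{l_1}{s}(w_1v_3^\infty-w_2v_3^0)$ and $m_3=l_2/s$ from \eqref{m3eqns}, and using that the prescribed values $l_1=\cali_{\bfv_2}/\gcd(|\bfw|,\cali_{\bfv_2})$ and $l_2=|\bfw|/\gcd(|\bfw|,\cali_{\bfv_2})$ satisfy the balancing identity $l_2\cali_{\bfv_2}=l_1|\bfw|$, equation \eqref{KE1} loses its dependence on $\cali_{\bfv_2}$ and becomes a relation between $r_3$ and $t:=v_3^0/v_3^\infty$, while the elementary integration in \eqref{KE2} yields $t=\bigl((r_3+1)^2+2\bigr)/\bigl((r_3-1)^2+2\bigr)$, which is strictly monotone in $r_3$. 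Eliminating $t$ leaves one equation in $r_3$ alone; following Sections 6.1 and 6.2 of \cite{BoTo14a} one verifies that its two sides are oppositely ordered at the two ends of the admissible $r_3$-interval cut out by $w_1>w_2$ and the positivity requirements (for $r_3\in(0,1)$ one has $t\in(1,3)$, and the left-hand side runs from $0$ past the bounded right-hand side, either forced by the sign of $w_1-3w_2$ or because $w_1v_3^\infty-w_2v_3^0$ changes sign on the way), so the intermediate value theorem produces a solution $r_3$, hence a ray $\bfv_3$ in the interior of the $\bfw$-cone with $n>0$ and $0<r_3<1$ mutually consistent.

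It remains to verify the two conditions that upgrade this formal solution to a genuine metric, and this is where the real work is. First, one reads off from Lemma \ref{orbFano}, using $a>0$, $b=n\hat{b}$, $c=n\hat{c}$ and the sign of $r_3$, that $(M_3(a,b,c),\grD_\bfm)$ is log Fano with $c_1^{orb}$ in its K\"ahler cone, so the construction of Section 4 applies. Second, and this is the main obstacle, one must show that the function $\Theta$ determined by \eqref{eq:toricboundary} together with \eqref{KE1}-\eqref{KE2} is strictly positive on $(-1,1)$: for the K\"ahler-Einstein compatible metric $\Theta$ is an explicit low-degree expression in $\gz$ vanishing to first order at $\gz=\pm1$ with the prescribed slopes, and one must rule out interior zeros for the solution parameters. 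This positivity estimate, together with the admissibility of $g_{base}$ (Example 6.8 of \cite{BoTo14a}, with $\rho_{base}=\cali_{\bfv_2}\,\omega_{base}$), is precisely the content of \cite{ApCaGa06} and Sections 6.1 and 6.2 of \cite{BoTo14a}; the only new feature is that the base is the K\"ahler-Einstein Hirzebruch \emph{orbifold} $(\calh_a,\grD_{\bfm_2})$ arising as the quotient of $Y^{p,q}$ instead of a smooth K\"ahler-Einstein surface, which leaves the computation formally unchanged. Granting positivity, \eqref{compatiblemetric} defines an orbifold K\"ahler-Einstein metric on $(M_3(a,b,c),\grD_\bfm)$ in the polarization $c_1^{orb}$; pulling it back along the $S^1$-orbibundle $M^7\to(M_3(a,b,c),\grD_\bfm)$ determined by $\xi_{\bfv_3}$ and invoking Lemma \ref{smoothquot} for the smoothness of $M^7$ yields the asserted Sasaki-Einstein structure with Reeb field in the $2$-dimensional $\bfw$-Sasaki cone. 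In short, the whole difficulty is concentrated in the positivity of $\Theta$ and in confirming that the solution ray lies in the interior of the cone; everything else is bookkeeping transported from \cite{BoTo14a}.
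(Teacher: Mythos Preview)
Your proposal is correct and follows essentially the same route as the paper. The paper's own argument is terse: it notes that Sections 6.1 and 6.2 of \cite{BoTo14a} carry through, fixes $r_3=\frac{w_1v_3^\infty-w_2v_3^0}{w_1v_3^\infty+w_2v_3^0}$ so that \eqref{KE1} is automatically satisfied (this is exactly your balancing identity $l_2\cali_{\bfv_2}=l_1|\bfw|$), and reduces everything to the single integral condition \eqref{SEall}, whose solvability and the accompanying positivity of $\Theta$ are then imported from \cite{BoTo14a}. Your version unpacks these same steps more explicitly---in particular your computation $t=\bigl((r_3+1)^2+2\bigr)/\bigl((r_3-1)^2+2\bigr)$ from \eqref{KE2} is correct and equivalent to the paper's \eqref{SEall}---and you rightly flag the positivity of $\Theta$ and the irregular case as the places where real work is hidden behind the citation.
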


Specifically, using equation (59) from \cite{BoTo14a}, we know that if the ray defined by co-prime $(v_3^0,v_3^\infty)$ is quasi-regular, then
we ought to look at the K\"ahler class determined by
$r_3=\frac{w_1v_3^\infty - w_2v_3^0}{w_1v_3^\infty + w_2v_3^0}$.
Now with this $r_3$, and the above choice of $(l_1,l_2)$, \eqref{KE1} is automatically solved and \eqref{KE2} becomes (similarly to (68) in \cite{BoTo14a})
\begin{equation}\label{SEall}
\int_{-1}^1 \left((v_3^0-v_3^\infty)-(v_3^0+v_3^\infty)\gz) \right)((w_1v_3^\infty+w_2v_3^0)+ (w_1v_3^\infty-w_2v_3^0)\gz)^2d\gz = 0.
\end{equation}
This equation defines a priori a quasi-regular Sasaki $\eta$-Einstein ray (and thus, up to transverse homothety, a Sasaki-Einstein structure), but, by the same arguments as in Section 6.1 of \cite{BoTo14a}, any solution $(v_3^0,v_3^\infty) \in \bbr^+\times \bbr^+$ of \eqref{SEall} defines a Sasaki $\eta$-Einstein ray in the $\bfw$-cone, which is irregular unless $v_3^\infty/v_3^0 \in \bbq$. This gives

\begin{corollary}\label{SEsoln}
For any pair of relatively prime positive integers $(w_1,w_2)$ satisfying $w_1>w_2$, we obtain an SE metric by solving the cubic equation
\begin{equation}\label{kw}
3w_2k^3+(2w_2-w_1)k^2-(2w_1-w_2)k-3w_1=0.
\end{equation}
The ray of the Reeb vector field of the SE metric is then given by
\begin{equation}\label{kv}
\frac{v_3^\infty}{v_3^0} =  \frac{3+2k+k^2}{(1+2k+3k^2)}=k\frac{w_2}{w_1}.
\end{equation}
The SE metric is quasi-regular when $k$ is a rational root of \eqref{kw} and irregular when it is an irrational root.
\end{corollary}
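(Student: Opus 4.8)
The plan is to derive \eqref{kw} and \eqref{kv} as a direct transcription of the integral equation \eqref{SEall}, after which existence of the Reeb vector field and the quasi-regular/irregular dichotomy are immediate. By the discussion preceding the statement together with Theorem~\ref{SEexistence}, for the prescribed pair $(l_1,l_2)$ and with $r_3=\frac{w_1v_3^\infty-w_2v_3^0}{w_1v_3^\infty+w_2v_3^0}$ the condition \eqref{KE1} holds automatically, and a pair $(v_3^0,v_3^\infty)\in\bbr^+\times\bbr^+$ determines a Sasaki $\eta$-Einstein ray in the $\bfw$-cone precisely when \eqref{KE2}, equivalently \eqref{SEall}, is satisfied. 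So the whole task reduces to showing that \eqref{SEall} is the same equation as \eqref{kw}.

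First I would expand the integrand of \eqref{SEall}. With $A=v_3^0-v_3^\infty$, $B=v_3^0+v_3^\infty$, $C=w_1v_3^\infty+w_2v_3^0$, $D=w_1v_3^\infty-w_2v_3^0$, the integrand is $(A-B\gz)(C+D\gz)^2$, a cubic polynomial in $\gz$ whose odd part integrates to $0$ over $[-1,1]$ while $\int_{-1}^1\gz^{2j}\,d\gz=\tfrac{2}{2j+1}$. Hence \eqref{SEall} collapses to $3AC^2+AD^2-2BCD=0$, i.e. $A(3C^2+D^2)=2BCD$. Using $3C^2+D^2=4\bigl(w_1^2(v_3^\infty)^2+w_1w_2v_3^0v_3^\infty+w_2^2(v_3^0)^2\bigr)$ and $CD=w_1^2(v_3^\infty)^2-w_2^2(v_3^0)^2$, then dividing by $(v_3^0)^3$, this becomes a homogeneous cubic in the ratio $s=v_3^\infty/v_3^0$:
\[
-3w_1^2s^3+(w_1^2-2w_1w_2)s^2+(2w_1w_2-w_2^2)s+3w_2^2=0.
\]

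Next I would set $s=k\,w_2/w_1$, i.e. $k=\dfrac{w_1v_3^\infty}{w_2v_3^0}$ — which is just the ratio of the transmuted weight vector $\bfw'=(v_3^\infty w_1,v_3^0 w_2)$ appearing in the proof of Theorem~\ref{38thm} — and clear the common factor $w_2^2/w_1$; since $w_1,w_2>0$ this is a bijective reparametrization of the positive ray and the displayed cubic becomes precisely \eqref{kw}. To obtain the closed form in \eqref{kv}, collect in \eqref{kw} the terms carrying $w_1$ and those carrying $w_2$: one gets $w_2\,k(3k^2+2k+1)=w_1(k^2+2k+3)$, hence $\dfrac{w_2}{w_1}=\dfrac{k^2+2k+3}{k(3k^2+2k+1)}$, and therefore $\dfrac{v_3^\infty}{v_3^0}=k\dfrac{w_2}{w_1}=\dfrac{3+2k+k^2}{1+2k+3k^2}$, which is \eqref{kv}.

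Finally, a real cubic has a real root, and since $w_1>w_2\geq1$ we have $2w_1-w_2>0$, so the coefficients of \eqref{kw} have sign pattern $(+,\,\pm,\,-,\,-)$ with exactly one sign change; by Descartes' rule \eqref{kw} has a unique positive real root $k$, and for it $3+2k+k^2>0$ and $1+2k+3k^2>0$, so $s=v_3^\infty/v_3^0>0$ and $(v_3^0,v_3^\infty)$ determines a ray in the interior of the $\bfw$-cone, whence Theorem~\ref{SEexistence} furnishes the Sasaki-Einstein metric. By the discussion preceding the statement, this structure is quasi-regular iff $v_3^\infty/v_3^0\in\bbq$; since $w_1,w_2$ are fixed integers and $k=\tfrac{w_1}{w_2}\cdot\tfrac{v_3^\infty}{v_3^0}$, that holds iff $k\in\bbq$, i.e. iff the root $k$ of \eqref{kw} is rational, and the structure is irregular otherwise. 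I expect no genuine obstacle in this argument; the only points that demand care are that \eqref{KE1} really is automatic for the chosen $(l_1,l_2)$ and $r_3$ — this is exactly where the hypotheses of Theorem~\ref{SEexistence} are used — and keeping honest track of the powers of $w_1$ and $w_2$ when passing from the cubic in $s$ to the cubic \eqref{kw} in $k$, together with the easy uniqueness of the positive root so that "the SE metric" is well defined.
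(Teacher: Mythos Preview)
Your proposal is correct and follows essentially the same route as the paper, which simply says ``This gives'' after \eqref{SEall} and leaves the algebra to the reader; you have carried out that algebra in full and checked that the substitution $s=v_3^\infty/v_3^0=k\,w_2/w_1$ transforms the homogeneous cubic coming from \eqref{SEall} into \eqref{kw}, and that regrouping \eqref{kw} yields \eqref{kv}. Your Descartes' rule argument for the existence and uniqueness of the positive root is a clean self-contained replacement for the paper's appeal to the analysis of Section~6.2 of \cite{BoTo14a} (which also pins down $k\in(1,\infty)$, a refinement you do not need for the corollary as stated).
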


It follows from the analysis in Section 6.2 of \cite{BoTo14a} that the positive real root $k$ lies in the open interval $(1,\infty)$.

\subsection{Quasi-regular examples}
Changing our point of view we see that for any $k\in (1,+\infty)\cap \bbq$ we can choose $(w_1,w_2)$ such that
\begin{equation}\label{kw2}
w_2/w_1 = \frac{3+2k+k^2}{k(1+2k+3k^2)}
\end{equation}
and then the Sasaki-Einstein metric from Theorem \ref{SEexistence} is quasi-regular with ray $\bfv_3$ defined by Equation \eqref{kv}. We now give some examples.

\begin{example}\label{138,7}
Here we give an example that builds on a bouquet of Sasaki cones from Example\footnote{Example 6.8 in this reference has a small typo, namely $v_2^0$ and $v_2^\infty$ got switched.} 6.8 of \cite{BoTo14a}. Corollary 5.5 of \cite{BoPa10} describes the well known $Y^{p,q}$ structures \cite{GMSW04a} on $S^2\times S^3$ as a $|\phi(p)|$-bouquet of Sasaki cones where $|\phi(p)|$ denotes the order of the Euler phi function $\phi(p)$. Let us consider the example when $p=13$. Since $13$ is prime the bouquet consists of $p-1=12$ Sasaki cones labeled by the $12$ positive integers $1\leq q<13$ and as such contains $12$ Sasaki-Einstein metrics. However, in order to construct SE metrics on our 7-manifolds, we need the SE structure on $Y^{p,q}$ to be quasi-regular. It is easy to check that for  the bouquet $\bigcup\{Y^{13,q}\}_{q=1}^{12}$ the only values of $q$ where the SE metric is quasi-regular is for $q=7,8$, all the other SE metrics in the bouquet are irregular. Let us look at these two cases a bit closer. 

In the case $Y^{13,8}$ we have
$a=70$, $v_2^0= 7$, $v_2^\infty=5$, $m_2=13$, so $m_2^0=91$ and $m_2^\infty=65$. This gives us that
$\cali_{\bfv_2} = 12$. If we put $k=2$ in the quasi-regular SE prescription above given by Equation \eqref{kw}, we see that $(w_1,w_2)=(34,11)$, so
$l_1=4$ and $l_2=15$, and $(v_3^0,v_3^\infty)=(17, 11)$. Now we calculate that
$s=\gcd(|w_1 v_3^\infty-w_2 v_3^0|,l_2)=1$ so
$n=l_1\frac{w_1 v_3^\infty-w_2 v_3^0}{s}= 748$, and 
$(m_3^0,m_3^\infty)=15(17,11)$. Then the quotient is the Bott orbifold given by the log pair $(M(a,b,c),\Delta_\bfm)$, where 
$$
\begin{array}{ccl}
a &=& 70,\\
b &= & n \hat{b} = n\frac{(2m_2v_2^0+a)v_2^\infty}{\cali_{\bfv_2}} =78540,\\
c &= & n \hat{c}=n\frac{v_2^0+v_2^\infty}{\cali_{\bfv_2}} =748,\\ 
\bfm &= &(1,1,91,65,255,165).
\end{array}
$$ 

In the case where $p=7$ in $Y^{p,q}$ we have
$a=36$, $v_2^0= 4$, $v_2^\infty=3$, $m_2=13$, so $m_2^0=52$ and $m_2^\infty=39$. This gives us that
$\cali_{\bfv_2} = 7$. Again we put $k=2$ in Equation \eqref{kw} which gives 
$(w_1,w_2)=(34,11)$ and $(v_3^0,v_3^\infty)=(17, 11)$ respectively. Now $l_1=7$ and $l_2=45$, and 
$s=\gcd(|w_1 v_3^\infty-w_2 v_3^0|,l_2)=1$ so
$n=l_1\frac{w_1 v_3^\infty-w_2 v_3^0}{s}= 1309$, and 
$(m_3^0,m_3^\infty)=45(17,11)$. 
Then the quotient is the Bott orbifold given by the log pair $(M(a,b,c),\Delta_\bfm)$, where 
$$
\begin{array}{ccl}
a &=& 36,\\
b &= & n \hat{b} = n\frac{(2m_2v_2^0+a)v_2^\infty}{\cali_{\bfv_2}} =78540,\\
c &= & n \hat{c}=n\frac{v_2^0+v_2^\infty}{\cali_{\bfv_2}} = 1309,\\ 
\bfm &= &(1,1,52,39,765,495).
\end{array}
$$ 

One can easily check from the torsion in Theorem \ref{maintopthm} that the two SE 7-manifolds $Y^{13,8}\star_{4,15}S^3_{(34,11)}$ and $Y^{13,7}\star_{7,45}S^3_{(34,11)}$ are not homotopy equivalent. For both of these 7-manifolds the Reeb field that gives the SE metric is quasi-regular. Moreover, they are both induced from the same Reeb ray, namely $\{\xi_{a(17,11)}\}_{a>0}$ of the same $S^3_{(34,11)}$, and $b$ is the same for the quotient orbifolds. 

For each choice of the rational number $k>1$ we obtain a pair of quasi-regular SE 7-manifolds induced by the $S^3_\bfw$ join and its Reeb field $\xi_{\bfv_3}$ where $\bfw$ and $\bfv_3$ are determined by Equations \eqref{kw} and \eqref{kv}.
\end{example}

\begin{example} Here we give a $1$-parameter family of smooth quasi-regular examples.
First, let $k_2 \in \bbz^{\geq 0}$ be given (using the subscript "$2$" to indicate that this is a choice at the second stage). Then from \cite{GMSW04a}) we get a quasi-regular Sasaki-Einstein $Y^{p,q}$ example by choosing
$$p=12k_2^2+18k_2+7 \quad \quad \quad {\rm and} \quad \quad \quad q=12k_2^2 + 16 k_2+5.$$
It is not hard to check that $\gcd(p,q)=1$ and $\gcd(p+q,p-q)=2$. Accordingly we recognize from \eqref{Ypqjoin} that
$$
Y^{p,q}=S^3\star_{l,p}S^3_{\frac{p+q}{l},\frac{p-q}{l}}= S^3\star_{2,(12k_2^2+18k_2+7)}S^3_{(2+3k_2)(3+4k_2),(1+k_2)}
$$
and that the quotient Hirzebruch orbifold of the quasi-regular Sasaki-Einstein metric is $(\calh_a,\grD_{\bfm_2})$ where,
using  \eqref{YpqisEinstein} and \eqref{step1data},
$$
\begin{array}{ccl}
{\bfm}_2 & = & (m^0_2,m^\infty_2)=(12k_2^2+18k_2+7)((3+4k_2),2(1+k_2)),\\
\\
m_2& = & 12k_2^2+18k_2+7,\\
\\
a & = &6(1+k_2)(1+2k_2)(3+4k_2).
\end{array}
$$
We can calculate from Lemma \ref{Ypqind} that $\cali_{\bfv_2}  = 5+6k_2$.

Now choosing $k=3$ in \eqref{kw} and \eqref{kv},
we have, from  Theorem \ref{SEexistence}, a quasi-regular Sasaki-Einstein structure on 
$M^7=Y^{p,q}\star_{l_1,l_2}S^3_\bfw$
with quotient log pair $(M_3(a,b,c),\grD_\bfm)$ given by Theorem \ref{38thm}.
Indeed, we have
$$
\begin{array}{ccl}
a&=& 6(1+k_2)(1+2k_2)(3+4k_2)\\
\\
b&=&4(1+k_2)(2+3k_2)(3+4k_2)n\\
\\
c&=&n\\
\\
\bfm &=& (m_1^0,m_1^\infty, m_2^0,m_2^\infty,m_3^0,m_3^\infty)\\
\\
(m_1^0,m_1^\infty) &=& (1,1)\\
\\
(m_2^0,m_2^\infty) &=&(12k_2^2+18k_2+7)((3+4k_2),2(1+k_2))\\
\\
(m_3^0,m_3^\infty) &=&m_3 (v_3^0, v_3^\infty),
\end{array}
$$
where
$$
\begin{array}{ccl}
n  & =  & l_1\frac{102}{\gcd(102,l_2)}\\
\\
m_3 & = & \frac{l_2}{\gcd(102,l_2)}\\
\\
l_1 & =& \frac{\cali_{\bfv_2}}{\gcd(20,5+6k_2)}\\
\\
l_2 & = & \frac{20}{\gcd(20,5+6k_2)}\\
\\
(w_1,w_2) &=& (17,3) \\
\\
(v_3^0,v_3^\infty)&= &(17,9).
\end{array}
$$

Using Lemma \ref{smoothquot}, we know that
the corresponding Sasaki structure is a smooth manifold if and only if 
$$
\begin{array}{rcl}
\gcd(l_2(12k_2^2+18k_2+7)(3+4k_2),17l_1)&=&1\\
\gcd(l_2((12k_2^2+18k_2+7)(3+4k_2),3l_1)&=&1\\
\gcd(l_2(12k_2^2+18k_2+7))2((1+k_2)),17l_1)&=&1\\
\gcd(l_2(12k_2^2+18k_2+7)2(1+k_2),3l_1)&=&1
\end{array}
$$
In order to get smooth examples, let us now assume $k_2=255\,t+ 10$ with $t \in \bbz^{+}$. Then we have
$$
\begin{array}{ccl}
\cali_{\bfv_2} & =& 5(306\,t+ 13)\\
\\
l_1 & =& 306\,t+ 13\\
\\
l_2 & = & 4\\
\\
n  & =  & 51(306\,t+ 13) \\
\\
m_3 & = & 2\\
\
\end{array}
$$
and then the corresponding Sasaki structure is a smooth manifold if and only if 
$$
\begin{array}{rcl}
\gcd(4(1387+ 65790 t+780300 t^2)(1020\,t+43),17(306\,t+ 13))&=&1\\
\gcd(4(1387+ 65790 t+780300 t^2)(1020\,t+43),3(306\,t+ 13))&=&1\\
\gcd(8(1387+ 65790 t+780300 t^2))(255\,t+ 11),17(306\,t+ 13))&=&1\\
\gcd(8(1387+ 65790 t+780300 t^2))(255\,t+ 11),3(306\,t+ 13))&=&1.
\end{array}
$$
if and only if
$$
\begin{array}{rcl}
\gcd((1387+ 65790 t+780300 t^2)(1020\,t+43),(306\,t+ 13))&=&1\\
\gcd((1387+ 65790 t+780300 t^2))(255\,t+ 11),(306\,t+ 13))&=&1.
\end{array}
$$
Since $\forall t\in \bbz^+$,
$$
\begin{array}{rcl}
6(255\,t+ 11)-5(306\,t+ 13)&=&1\\
\\
10(306\,t+ 13)-3(1020\,t+43) &=&1\\
\\
3 \left(780300 t^2+65790 t+1387\right)-(7650 t+320) (306 t+13)&=&1,
\end{array}
$$
we have that this is always satisfied.
Note that with $k_2=255\,t+10$ we get
$$
\begin{array}{ccl}
a&=& 6(255\,t+ 11)(510\,t+21)(1020\,t+43)\\
\\
b&=&204(255\,t+ 11)(765\,t + 32)(1020\,t+43)(306\,t+ 13)\\
\\
c&=&51(306\,t+ 13)\\
\\
\bfm &=& (m_1^0,m_1^\infty, m_2^0,m_2^\infty,m_3^0,m_3^\infty)\\
\\
(m_1^0,m_1^\infty) &=& (1,1)\\
\\
(m_2^0,m_2^\infty) &=&(1387+ 65790 t+780300 t^2)(1020\,t+43, 2(255\,t+ 11))\\
\\
(m_3^0,m_3^\infty) &=& 2 (17,9).
\end{array}
$$
Finally, the $p$ and $q$ in $Y^{p,q}$ are here given by
$$p= 780300 t^2+65790 t+1387$$
and 
$$q= 15 (170 t+7) (306 t+13),$$
so the smooth Sasaki Einstein structures live on
$$Y^{780300 t^2+65790 t+1387,15 (170 t+7) (306 t+13)}\star_{306\,t+ 13,4} S^3_{17,3}.$$
\end{example}

\def\cprime{$'$} \def\cprime{$'$} \def\cprime{$'$} \def\cprime{$'$}
  \def\cprime{$'$} \def\cprime{$'$} \def\cprime{$'$} \def\cprime{$'$}
  \def\cdprime{$''$} \def\cprime{$'$} \def\cprime{$'$} \def\cprime{$'$}
  \def\cprime{$'$}
\providecommand{\bysame}{\leavevmode\hbox to3em{\hrulefill}\thinspace}
\providecommand{\MR}{\relax\ifhmode\unskip\space\fi MR }

\providecommand{\MRhref}[2]{%
  \href{http://www.ams.org/mathscinet-getitem?mr=#1}{#2}
}
\providecommand{\href}[2]{#2}

\end{document}